\newtheorem{lemma}{Lemma}
\newtheorem{prop}{Proposition}
\newtheorem{thm}{Theorem}
\newtheorem{conj}{Conjecture}
\theoremstyle{definition}
\newtheorem{rem}{Remark}
\newtheorem{defn}{Definition}
\newtheorem{prob}{Problem}
\newcommand{\R}{{\mathbb R}}
\newcommand{\C}{{\mathbb C}}
\newcommand{\cC}{{\mathcal C}}
\newcommand{\cO}{{\mathcal O}}
\newcommand{\Aut}{\mathrm{Aut}}
\begin{document}

\title{A splitting theorem for extremal K\"ahler metrics}

\author[Vestislav Apostolov]{Vestislav Apostolov}
\address{Vestislav Apostolov, D\'epartement de Math\'ematiques, UQAM, C.P. 8888, Succ. Centreville, Montr\'eal (Qu\'ebec), H3C 3P8, Canada}
\email{apostolov.vestislav@uqam.ca}

\author[Hongnian Huang]{Hongnian Huang}\address{Hongnian Huang, CMLS, \'Ecole Polytechnique, Palaiseau, 91128, France}
\email{hnhuang@gmail.com}

\date{}
\thanks{The authors would like to thank D. Phong for valuable suggestions, as well as X.~X.~Chen for his interest in this work. A special acknowledgment is due to G.~Sz\'ekelyhidi for suggesting to us  to consider approximations with balanced metrics, and for sharing with us his expertise. He decisively contributed to this project by pointing out to us the notion of Chow stability relative to a maximal torus discussed in the paper, as well as the uniqueness result in Lemma~2.}
\thanks{The second named author is financially supported by the Fondation math\'ematique Jacques Hadamard.}

\begin{abstract}
Based on recent work of S.~K.~Donaldson~\cite{Do-one,D1} and T.~Mabuchi~\cite{M1,M2,M3}, we prove that any extremal K\"ahler metric in the sense of E.~Calabi~\cite{cal}, defined on the product of polarized compact complex projective manifolds is the product of extremal K\"ahler metrics on each factor, provided that either the integral Futaki invariants of the polarized manifold vanish or its automorphism group satisfies a constraint. This extends a result of S.-T.~Yau~\cite{yau} about the splitting of a K\"ahler--Einstein metric on the product of compact complex manifolds to the more general setting of extremal K\"ahler metrics.
\end{abstract}

\keywords{}

\maketitle

\section{Introduction} Extremal K\"ahler metrics were first introduced and studied by E.~Calabi in
\cite{cal,cal-2}. Let $X$ denote a connected compact complex manifold of complex dimension $n$.  A K\"ahler metric $g$ on $X$, with
K\"ahler form $\omega_g$, is {\it extremal} if it is a
critical point of the functional $g \mapsto \int _X s _g ^2 \, \frac{\omega
_g ^n}{n!}$, where $g$ runs over the set of all K\"ahler metrics on $X$
within a fixed K\"ahler class $\Omega = [\omega]$, and $s_g$ denotes the
scalar curvature of $g$. As shown in \cite{cal}, $g$ is extremal if and
only if the symplectic gradient $K := {\rm grad} _{\omega} s _g = J \, {\rm
grad} _g s _g$ of $s _g$ is a Killing vector field (i.e. $\mathcal{L} _K g
= 0$) or, equivalently, a (real) holomorphic vector field. Extremal K\"ahler metrics include K\"ahler
metrics of constant scalar curvature --- CSC K\"ahler metrics for short ---
and in particular K\"ahler--Einstein metrics.  Clearly, if the identity component ${\rm Aut} _0 (X)$ of the automorphism group of $X$ is
reduced to $\{1\}$, i.e. if $X$ has no non-trivial holomorphic vector fields, any extremal K\"ahler metric is CSC, whereas a CSC K\"ahler metric is K\"ahler--Einstein if and only if $\Omega$ is a multiple of the (real) first Chern class $c _1 (X)$.  

It is conceivable to think about an extremal K\"ahler metric $g$ in  $\Omega$  as a {\it canonical} representative of the K\"ahler metrics in the K\"ahler class $\Omega$. One would then expect that the extremal K\"ahler metrics in $\Omega$ reflect most of the holomorphic invariants of the pair $(X, \Omega)$.  In this vein, the goal of this note is  to discuss how  that the following natural splitting problem fits in  with some recent progress in the field.

\begin{prob}Let $X_i,$ be compact projective manifolds polarized by ample holomorphic line bundles $L_i$ and $X= \prod_{i=1}^r X_i$ their product endowed with the polarisation $L= \bigotimes_{i=1}^r L_i$, where $L_i$ is seen as a holomorphic line bundle over $X$ via the natural pull-back. Does any extremal K\"ahler metric $g$ in the K\"ahler class $\Omega = 2\pi c_1(L)$ on $X$ is the Riemannian product of extremal K\"ahler metrics $g_i$ in the K\"ahler classes $\Omega_i=2\pi c_1(L_i)$ on the factors $X_i$?\end{prob}

Several remarks are in order.

\smallskip
First of all, it is well-known (see e.g. \cite[Thm.~2.1]{yau}) that the answer is positive if we suppose that  $g$ is a K\"ahler--Einstein metric on $X$.  It then follows from a standard Bochner argument (see e.g. \cite{gauduchon-0,kob}) for the holomorphic projectors $P_j : TX = \bigoplus_{i=1}^r TX_i\to TX_i$, where $TX$ (resp. $TX_i$) denotes the holomorphic tangent bundle of $X$ (resp. $X_i$). This is the case when each $X_i$ is either a Calabi--Yau manifold (i.e. $c_1(X_i)=0$) or has ample canonical line bundle  $K_{X_i}$ and $L_i= K_{X_i}$, or is a Fano manifold  with vanishing Futaki invariant and $L_i= K^{-1}_{X_i}$.

\smallskip
Second,  it is now known that the extremal K\"ahler metrics in a K\"ahler class $\Omega$ are all isometric under the action of the {\it reduced automorphism group}\footnote{$\widetilde{\Aut}_0 (X)$ is the unique connected {\it linear
algebraic subgroup} of ${\Aut}_0 (X)$ such that the quotient ${\rm
Aut} _0 (X)/\widetilde{{\Aut}}_0 (X)$ is the Albanese torus of $X$~\cite{fujiki-0}; its Lie algebra is the space of
(real) holomorphic vector fields whose zero-set is non-empty
~\cite{fujiki-0,kobayashi,Le-Sim,gauduchon-book}.} $\widetilde{\Aut}_0 (X)$~\cite{BM,CT,Do-one,M4}. Thus,   the main difficulty in proving the splitting property is to show that if  the polarized projective manifold $(X,L)= \prod_{i=1}^r(X_i,L_i)$ admits an extremal K\"ahler metric, then each factor $(X_i,L_i)$ does also admit extremal K\"ahler metric. It was suggested by S.-T.~Yau \cite{yau-1} that a complete obstruction to the
existence of extremal K\"ahler metrics in the K\"ahler class $\Omega = 2\pi c _1
(L)$ on a projective manifold $X$ polarized by an ample
holomorphic line bundle $L$ should be expressed in terms of {\it stability}
of the pair $(X, L)$.  The currently accepted notion of stability is the
$K$-({\it poly}){\it stability} introduced by G.~Tian~\cite{Tian2} and S.~K.~Donaldson~\cite{Do2}.  The {\it Yau--Tian--Donaldson conjecture} can
then be stated as follows. {\it A polarized projective manifold $(X, L)$
admits a CSC K\"ahler metric if and only if it is $K$-polystable.}  The implication `CSC $\Rightarrow$
{K-polystable}' in the conjecture is now well-established, thanks to work
by S.~K.~Donaldson \cite{Do4}, X.~X.~ Chen--G.~Tian~\cite{CT}, J.~Stoppa
\cite{stoppa}, and T.~Mabuchi \cite{mab-three,mab-three1} but the other direction is still open.  In order to
account for extremal K\"ahler metrics of non-constant scalar curvature,
G.~Szekelyhidi  introduced in \cite{Sz, gabor} the notion of {\it
relative} $K$-(poly)stability with respect to a maximal torus of the
connected component ${\rm Aut}_0(X,L)$ of the automorphism group ${\rm Aut}(X,L)$ of the pair $(X, L)$~\footnote{Recall that ${\rm Aut}(X,L)$ consist of the automorphisms of $X$ which come from automorphisms of $L$. It is well-known (see e.g. \cite{kob-0,gauduchon-book}) that ${\rm Aut}_0(X,L)= \widetilde{{\rm Aut}}_0(X)$.} and the
similar implication `extremal $\Rightarrow$ {relative K-polystable}' was
obtained in~\cite{gabor-stoppa}. While it is not hard to see that in the product case (relative) $K$-(poly)stability of $(X,L)$ implies (relative) $K$-(poly)stability of each factor $(X_i, L_i)$, examples from~\cite{ACGT} suggest that the notion of relative $K$-(poly)stability must be further strengthened in order to establish the other direction in the Yau--Tian--Donaldson correspondence. 

\smallskip Our third observation is that if we start with a product K\"ahler metric in the class $2\pi c_1(L)$, invariant under a maximal connected compact subgroup $K$ of $\Aut_0(X)= \prod_{i=1}^r\Aut_0(X_i)$, then the $K$-relative Calabi flow (a gradient flow for the $K$-relative Mabuchi energy) preserves the Riemannian product structure. On the other hand, it is expected that this flow should converge to an extremal K\"ahler metric when it exists (see e.g.~\cite{D0}). Although this conjecture is very far from being solved, a partial evidence for it is given in~\cite{chen-hu,huang-zheng, tosatti-weinkove}. Note also that this approach has the advantage to apply to the more general case of a product of compact K\"ahler manifolds endowed with a product K\"ahler class.

\smallskip Thus motivated, we prove the splitting property under two additional hypotheses. 

\begin{thm}\label{main} Let $X_i$ be compact projective manifolds polarized by ample holomorphic line bundles $L_i$ and $X= \prod_{i=1}^r X_i$ their product endowed with the polarisation $L= \bigotimes_{i=1}^r L_i$, where $L_i$ is seen as a holomorphic line bundle over $X$ via the natural pull-back. Then, any extremal K\"ahler metric $g$ in the K\"ahler class $\Omega = 2\pi c_1(L)$ on $X$ is the Riemannian product of extremal K\"ahler metrics $g_i$ in the K\"ahler classes $\Omega_i=2\pi c_1(L_i)$ on the factors $X_i$, provided that at least one of the following hypotheses is satisfied. 
\begin{enumerate} 
\item[\rm (i)] The integral Futaki invariants of $(X,L)$ introdced in \cite{futaki-chow} all vanish.
\item[\rm (ii)] For at most one factor $(X_i,L_i)$, the group  ${\Aut}_0(X_i,L_i)$ has a center of positive dimension. 
\end{enumerate} 
\end{thm}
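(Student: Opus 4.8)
The plan is to quantize the extremal K\"ahler metric $g$ by a sequence of balanced metrics relative to a maximal torus --- following Donaldson and Mabuchi --- and then to exploit the compatibility of the balanced condition with the Segre embedding of a product. Since $h^0(X_j,\cO_{X_j})=1$, the K\"unneth formula shows that every holomorphic vector field on $X$ is a sum of pull-backs of holomorphic vector fields on the factors; hence $\Aut_0(X)=\prod_i\Aut_0(X_i)$, $\widetilde{\Aut}_0(X)=\prod_i\widetilde{\Aut}_0(X_i)=\Aut_0(X,L)$, and $\widetilde{\mathfrak{aut}}_0(X)=\bigoplus_i\widetilde{\mathfrak{aut}}_0(X_i)$ on Lie algebras. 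It then suffices to produce, for $k\gg 0$, a balanced metric for $(X,L^k)$ that is a Riemannian product $\bigoplus_i\omega_{k,i}$ with $\omega_{k,i}$ balanced for $(X_i,L_i^k)$ and converges in $C^\infty$ to $\omega_g$: then $\omega_g=\bigoplus_i\omega_{g_i}$ is a genuine Riemannian product with $\omega_{g_i}\in\Omega_i$, and each factor of an extremal Riemannian product is extremal, since the scalar curvature, its symplectic gradient, and the Killing and holomorphicity conditions all decompose along the product. (Equivalently, by the uniqueness of extremal metrics up to $\widetilde{\Aut}_0(X)=\prod_i\widetilde{\Aut}_0(X_i)$, it is enough to show that each $(X_i,L_i)$ admits an extremal metric in $\Omega_i$.)

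\emph{Quantization.} Fix a maximal torus $T=\prod_iT_i$ of $\widetilde{\Aut}_0(X)=\Aut_0(X,L)$ whose Lie algebra contains the extremal field $K=\grad_\omega s_g$, and put $V_k=H^0(X,L^k)$. I would invoke the Donaldson--Mabuchi quantization of extremal metrics: under hypothesis (i) or (ii), $\omega_g$ is the $C^\infty$ limit of a sequence of metrics $\omega_k$ for $(X,L^k)$ balanced relative to $T$, i.e. whose center of mass in $\End(V_k)$ equals a fixed $T$-equivariant target $\Theta_k$ built from the equivariant Hilbert polynomial. Under (i), the vanishing of the integral Futaki invariants of $(X,L)$ forces in particular the classical Futaki invariant to vanish, so $g$ is CSC, $\Theta_k$ is a multiple of the identity, and $(X,L)$ is asymptotically Chow polystable, so the $\omega_k$ may be taken honestly balanced. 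Under (ii), one must show that the obstructions to asymptotic relative Chow polystability --- which, as the examples in \cite{ACGT} indicate, need not vanish for a general extremal metric --- are controlled, using that these are governed by integral invariants on $\widetilde{\mathfrak{aut}}_0(X)=\bigoplus_i\widetilde{\mathfrak{aut}}_0(X_i)$ together with the hypothesis that all but one of the $\widetilde{\mathfrak{aut}}_0(X_i)$ has finite center, so that the asymptotic relative Chow polystability of $(X,L)$ falls within the scope of Mabuchi's theory. In either case one also needs each $(X_i,L_i)$ to be asymptotically (relatively) Chow polystable, which is the easy direction for products (a one-parameter degeneration of a factor induces one of $X$), whence each $(X_i,L_i^k)$ carries a metric $\omega_{k,i}$ balanced relative to $T_i$.

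\emph{Splitting and passage to the limit.} Write $V_k=\bigotimes_iV_{k,i}$ with $V_{k,i}=H^0(X_i,L_i^k)$; the Kodaira embedding of $(X,L^k)$ is the Segre composition of those of the $(X_i,L_i^k)$, and the target $\Theta_k=\bigotimes_i\Theta_{k,i}$. For an inner product on $V_k$ of tensor-product form $\bigotimes_i\langle\cdot,\cdot\rangle_{k,i}$, the pulled-back Fubini--Study form is $\sum_i\pi_i^*\omega_{k,i}^{\mathrm{FS}}$, its volume form is $\prod_i\pi_i^*\mathrm{vol}_{k,i}$, and the center of mass of $X$ equals the tensor product of the centers of mass of the factors; comparing with $\bigotimes_i\Theta_{k,i}$ and using positivity, the tensor product of the $\omega_{k,i}$ is balanced relative to $T$ on $(X,L^k)$. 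By the uniqueness of relatively balanced metrics up to $\Aut_0(X,L)=\prod_i\Aut_0(X_i,L_i)$ (Lemma~2), which preserves tensor-product structures, the metric $\omega_k$ of the previous step agrees --- after applying some $(\phi_{k,1},\dots,\phi_{k,r})\in\prod_i\Aut_0(X_i,L_i)$, which we absorb into the $\omega_{k,i}$ --- with $\bigoplus_i\omega_{k,i}$. Passing to the $C^\infty$ limit, the block structure survives, so $\omega_{k,i}\to\omega_{g_i}$ for some K\"ahler metric $\omega_{g_i}\in\Omega_i$, and $\omega_g=\bigoplus_i\omega_{g_i}$ is a Riemannian product; each $g_i$ is then extremal, which proves the theorem.

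\emph{Main obstacle.} The crux is the quantization step. Under (i) it is essentially Mabuchi's theorem on asymptotic Chow polystability of CSC manifolds with vanishing higher obstructions; the delicate case is (ii), where one must extract from the product decomposition $\widetilde{\mathfrak{aut}}_0(X)=\bigoplus_i\widetilde{\mathfrak{aut}}_0(X_i)$ and the constraint that at most one factor's automorphism group has a center of positive dimension that the relevant integral obstructions to asymptotic relative Chow polystability vanish, or are absorbed by the relative condition, so that Mabuchi's quantization relative to a maximal torus applies and the relatively balanced approximants converge to $\omega_g$. Making this precise --- and keeping track of the compatibility of Mabuchi's normalization with the Segre embedding --- is where I expect the real work to lie.
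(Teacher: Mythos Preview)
Your overall architecture---quantize the extremal metric by relatively balanced metrics, observe that products of balanced metrics are balanced, invoke uniqueness, and pass to the limit---matches the paper's. But you have misplaced where hypotheses (i) and (ii) enter, and this creates a genuine gap.

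Mabuchi's approximation theorem (Theorem~\ref{mabuchi}) produces, \emph{unconditionally}, hermitian metrics $h_k$ on $L^{m_k}$ balanced relative to a torus $T$ (of \emph{some} index) whose curvatures converge to the extremal metric. So the quantization step needs neither (i) nor (ii). Where the hypotheses are actually needed is in the uniqueness step that you hand off to Lemma~\ref{gabor}. That lemma guarantees uniqueness only for balanced metrics whose index satisfies constraint~\eqref{constraint}---zeros of $\mu_{G_T}$, the ``strong'' relative notion---whereas Mabuchi's theorem produces only zeros of $\mu_G$, the ``weak'' notion with unconstrained index. The strong approximation you would need is precisely Conjecture~\ref{relative-stability}, which is open; the paper says explicitly that the lack of an analogue of Lemma~\ref{gabor} for weakly relative balanced metrics is what prevents the argument from going through in general.

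The paper therefore argues differently. Under (i), the extremal metric is CSC and $(X,L)$ is asymptotically Chow polystable, so one can take $T=\{\Id\}$ and use \emph{honestly} balanced metrics; uniqueness then comes from the convexity of the Kempf--Ness functional $\mathbb D$ (Proposition~\ref{convex}), not from Lemma~\ref{gabor}. Under (ii), the paper does \emph{not} take $T$ to be a maximal torus; it takes $T$ to be the identity component of the centre of a maximal compact $K\subset\Aut_0(X,L)$. Hypothesis (ii) then forces $T_1=\{\Id\}$ on one factor, and the key observation is that in this case every $G^c$-orbit of admissible hermitian inner products on $V=V_1\otimes V_2$ contains a decomposable element $m^1\otimes m^2$ (this fails for general $T$). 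Proposition~\ref{split}---proved via properness of $\mathbb D$ (Proposition~\ref{proper}) and the closedness of the product locus $\mathcal M_{\rm prod}$---then shows that any relative balanced metric on $(X,L^k)$ already induces a product metric on $X$, bypassing any need for Lemma~\ref{gabor}. Your ``main obstacle'' paragraph is thus aimed at the wrong target: the delicate point is not an obstruction to quantization but the failure of uniqueness for weakly relative balanced metrics, and the way (ii) repairs it is a statement about orbit structure, not about integral Futaki-type invariants.
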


The hypothesis in (i) automatically holds if ${\rm Aut}_0(X,L)=\{ {\rm Id} \}$. However, it is known that the hypothesis in (i) is a restrictive condition in the case when ${\rm Aut}_0(X,L)$ is non-trivial (see e.g. \cite{OSY}). Also by the results in \cite{futaki-chow,M2}, in the case when $2\pi c_1(L)$ admits an extremal K\"ahler metric, $(X,L)$ is asymptotically Chow stable if and only if the integral Futaki invariants of $(X,L)$ introdced in \cite{futaki-chow} all vanish. More generally, the existence of an extremal K\"ahler metric in $2\pi c_1(L)$ is expected to imply that $(X,L)$ is asymptotically Chow stable with respect to a maximal torus $T \subset {\rm Aut}_0(X,L)$:  we give a precise formulation in Conjecture 1 below and discuss it in the light of  the work of T.~Mabuchi~\cite{M1,M2,M3,mab-three, mab-three1}.  We then show how the conjectured correspondence would solve (via Lemma 2 and Theorem 7) the splitting of the extremal K\"ahler metrics in the general polarized case.

\smallskip
We now outline the  proof of Theorem~\ref{main}. It uses an idea going back to G.~Tian~\cite{tian-0} (se also \cite{yau0}) who proved that any K\"ahler metric $\omega$ in $2\pi c_1(L)$ can be approximated with induced Fubini--Study metrics from the projective embeddings of the polarized variety $(X, L)$. More precisely, let $h$ be a hermitian metric on $L$ whose curvature is $\omega$. The induced hermitian metric on each tensor power $L^k$  is still denoted by $h$, and using $h$ and $\omega$,  consider the $L_2$ hermitian inner product on each vector space $H^0(X,L^k)$. Fixing an orthonormal basis for each $H^0(X,L^k)$, define a sequence of embeddings $\Phi_k : X \hookrightarrow  \C P^{N_k}$ and induced K\"ahler metrics $\frac{1}{k} \Phi_k^*(\omega_{\rm FS})$ in $\Omega=2\pi c_1(L)$. Tian showed that  $\frac{1}{k} \Phi_k^*(\omega_{\rm FS})$ converge to $\omega$  in $\cC^2$ as $k \to \infty$ while the $\cC^{\infty}$ convergence follows from subsequent work by W.~Ruan~\cite{ruan}. For each $k$, let  $\{ s_0, \cdots, s_{N_k} \}$ of $H^0(X,L^k)$ with respect to the $L_2$ hermitian inner product defined by $h_k=h^{\otimes k}$ and $\omega$, we denote the corresponding Bergman kernel $\rho_k$ as 
$$\rho_k = \sum_{i=0}^{N_k} h_k(s_i,s_i).
$$
The expansion of Bergman kernel was established by D.~Catlin~\cite{catlin} and S.~Zeldich~\cite{zeldich}. The coefficients of the expansion were calculated by Z. Lu \cite{Lu}. An important ramification of this basic idea, relevant to the problem of existence of CSC metric in $2\pi c_1(L)$,  was given by S.~K.~Donaldson~\cite{Do-one} who proved that when ${\Aut}_0(X)=\{ 1 \}$, a CSC K\"ahler metric $\omega$ in $2\pi c_1(L)$ can be approximated in $\cC^{\infty}$  by using special projective embeddings called {\it balanced}, a  notion  previously  introduced and studied by H.~Luo~\cite{L} and S.~Zhang~\cite{Z} (see also \cite{BLY}): a hermitian metric $h_k$ on $L^k$ is called balanced if the corresponding Bergman kernel $\rho_k$ is a constant function on $X$, or equivalently,  if the curvature $\omega_k$ of $h_k$ satisfies $\omega_k= \frac{1}{k} \Phi_k^*(\omega_{\rm FS})$. Thus, S.~K.~Donaldson's theorem states that if ${\Aut}_0(X,L)=\{ 1 \}$ and $\omega$  is a CSC K\"ahler metric in $2\pi c_1(L)$, then for $k \gg 1$, there exists a balanced hermitian metric $h_k$ on $L^k$ with curvature $\omega_k$ and, moreover,  $\omega_k$ converges to $\omega$ in $\cC^{\infty}$ as $k \to \infty.$ T.~Mabuchi~\cite{M1,M2,M3} extended Donaldson's result to the case when ${\Aut}_0(X,L)$  is non-trivial and $\omega$ is an extremal K\"ahler metric: in this case, $\omega$ can be approximated in $\cC^{\infty}$ by the normalized curvatures  $\omega_k$ of hermitian metrics $h_k$ on $L^k$ which are {\it balanced relative to a torus} in ${\rm Aut} _0 (X,L)$: this theory is reviewed in  Section~\ref{s:relative balanced}. For simplicity,  we shall momentarily refer to such $h_k$'s as {\it relative} balanced metrics on $L^k$. In the case when $(X,L)= \prod_{i=1}^r (X_i,L_i)$, Grauert's direct image theorem for coherent sheaves implies that $H^0(X,L^k) = \bigotimes_{i=1}^r H^0(X_i,L_i^k)$. It is then easily seen that if each $(X_i, L_i^k)$ admits a relative balanced hermitian metric, then the tensor product metric on $(X,L^k)$ is relative balanced and has curvature compatible with the product structure. Conversely, we show in Section~\ref{s:proof} (see Theorem~\ref{reduced}) that if $L^k$ admits some relative balanced metric then each $L_i^k$ does. We achieve this by studying in Section~\ref{s:functional} the Kempf--Ness function ${\mathbb D}$ introduced by H.~Luo~\cite{L} and S.~K.~ Donaldson~\cite{D1} (it is the function denoted $D$ in \cite{L} and $\tilde Z$ in \cite{D1} and is  essentially the $\log$ of the Chow norm introduced in \cite{Z}). This observation, together with Mabuchi's approximation result alluded to above,  reduces our problem to showing the uniqueness of relative balanced metric on $L$ modulo the action of ${\rm Aut}_0(X,L)$. This is not automatic in the setting of \cite{M1,M2,M3} but holds under the assumptions (i) or (ii) of Theorem~\ref{main}. We thus propose in Section~\ref{s:relative balanced} a stronger notion of relative balanced metrics (which also appears in the recent work \cite{M5}) and point out that for such (strongly) relative balanced metrics the uniqueness modulo ${\rm Aut}_0(X,L)$ automatically holds (Lemma~\ref{gabor}).

\section{Hermitian metrics balanced relative to a torus and relative Chow stability}\label{s:relative balanced}
In this section we briefly review some material taken from the works of S.~K.~Donaldson~\cite{Do-one,D1}, H.~Luo~\cite{L}, T.~Mabuchi~\cite{M1,M2,M3,M4,M5} and S.~Zhang~\cite{Z} that we shall need in the sequel.

Let $X$ be a  compact complex projective manifold of complex dimension $n$,  polarized by a  very ample line bundle $L$, and $N+1$ be the dimension of $V=H^0(X,L)$. Let $\kappa : X \hookrightarrow  P(V^*)$ denotes the Kodaira embedding with $L= \kappa^*(\cO(1))$. For any  basis  ${\bf s}=\{s_0, \cdots,  s_N\}$ of $V$  we denote 
$$
\Phi_{\bf s} : X \hookrightarrow  \mathbb{C} {P}^N 
$$
the composition of $\kappa$ with the identification ${\bf s} : P(V^*) \cong {\mathbb C} P^{N}$.

The reduced automorphisms group $\widetilde{\Aut}_0(X)$ is the closed connected subgroup of ${\rm Aut}_0(X)$ whose Lie algebra $\mathfrak{h}_0$ is the ideal of holomorphic vector fields with zeros on $X$, see \cite{fujiki-0,kobayashi,Le-Sim,gauduchon-book}. It is well-known (see e.g.~\cite{gauduchon-book,kob-0}) that $\widetilde{\Aut}_0(X)$ coincides with the connected component ${\Aut}_0(X,L)$ of the group of automorphisms of the pair $(X,L)$ and we obtain a group representation $\rho : {\Aut}_0(X,L) \to {\rm PGL}(V)$.   One can think of  ${\Aut}_0(X,L)$ as the connected group generated by restrictions to $\kappa (X)$ of elements of ${\rm PGL}(V)$ which preserve $\kappa(X) \subset P(V^*)$;  replacing $L$ by the tensor power $L^{N+1}$, we can further lift the action of $\widetilde{\Aut}_0(X)={\rm Aut}_0(X,L)$ on $X$ to an action on the bundle $L$ (see e.g. \cite{kob-0}),  and find a group representation
\begin{equation}\label{representation}
\rho : \mathrm{Aut}_0(X,L) \rightarrow {\rm SL}(V).
\end{equation}
In conclusion, by replacing $L$ with a sufficiently big tensor power if necessarily, we can assume that the reduced automorphisms group $\widetilde{\Aut}_0(X) = {\Aut}_0(X,L)$ of $X$ lifts to act on $L$,  and identify the action of $\widetilde{\Aut}_0(X) = {\Aut}_0(X,L)$ on $X$ with the induced action on $\kappa(X)$ of the connected subgroup ${\rm SL}_0(V, X)$ of elements ${\rm SL} (V)$ which preserve $\kappa(X) \subset P(V^*)$; furthermore, we shall also  assume $N>n$.

From now on, we shall fix a real torus $T \subset \widetilde{\Aut}_0(X,J)$ and consider hermitian metrics $h$ on $L$  which are $T$-invariant and whose curvature $\omega$ defines a $T$-invariant K\"ahler form in $2\pi c_1(L)$. Note also that, by the Calabi theorem~\cite{cal}, if the K\"ahler class $\Omega= 2\pi c_1(L)$ admits an extremal K\"ahler metric, it will also admits one which is $T$-invariant. Thus,  following \cite{M1}, we are now in position to introduce the notion of a ($T$-invariant) hermitian metric $h$ on $L$ which is balanced relative to $T$.  Denote by $T^{c}$ the complexified action of $T$ and consider the lifted  linear  $T^{c}$-action on $V$ via $\rho$.   Then, for every character $\chi \in \mathrm{Hom}(T^{c}, \mathbb{C}^*)$, we set
$$
V(\chi) := \{ s \in V; \rho(t)\cdot s = \chi(t)\ s \ \mbox{ for all } t \in T^{c} \},
$$
and obtain the splitting with respect to the mutually distinct characters $\chi_1, \ldots, \chi_{\nu} \in \mathrm{Hom}(T^{c}, \mathbb{C}^*)$  
\begin{equation}\label{e:split}
V = \bigoplus_{k=1}^{\nu} V(\chi_k),
\end{equation}
with $\prod_{k=1}^{\nu} \chi_k^{n_k}=1$ where $n_k= {\rm dim}_{\C}(V(\chi_k))$.

\begin{defn} Let $m(\cdot, \cdot)$ be a hermitian inner product on $V$.
We say that $\{s_0, s_1, \ldots, s_N \}$ is an $admissible\ normal\ basis$ of $(V,m)$ if it is compatible with the decomposition \eqref{e:split} and provides a normal basis of $m$ on each factor $V(\chi_k)$, i.e. if there exist positive real constants $b_k$ ($k=1, \ldots, \nu)$,  with $\sum_{k=1}^\nu n_k b_k = N+1$, and a sub-basis $\{s_{k,i},  \ k=1, \ldots, \nu, \ i=1, \ldots, n_k \}$ for $V(\chi_k)$, such that
\begin{enumerate}
\item[$\bullet$] $m(s_{k,i}, s_{l,j})=0$ if $l \neq k$ or $i\neq j$;
\item[$\bullet$] $m(s_{k,i}, s_{k,i}) = b_k.$
\end{enumerate}
The  vector $b := (b_1, \ldots, b_\nu)$ is called $index$ of the admissible normal basis $\{s_0, \ldots, s_N\}$ for $(V,m)$; in the case when the index is $b=(1, \ldots, 1)$ we shall call the basis {\it admissible orthonormal} basis of $(V,m)$.
\end{defn}
Note that a hermitian inner product $m( \cdot, \cdot)$ admits an admissible normal basis if and only if $V(\chi_k) \perp^m V(\chi_l)$ for $k\neq l$,  which in turn is equivalent to $m$ being $T$-invariant. For any $T$-invariant hermitian metric $h$ on $L$ whose curvature is a K\"ahler form  in $2\pi c_1(L)$,   $V(\chi_k) \perp V(\chi_l)$ for  $k \neq l$ with respect to the induced $L^2$ hermitian inner product $m=\langle \cdot, \cdot \rangle_h$ on $V$, defined by  
$$
\langle s_1, s_2 \rangle_h = \int_X h(s_1, s_2) \omega^n,
$$ 
for any two holomorphic sections $s_1, s_2 \in H^0(X, L)$.  We then define the smooth function $$E_{h, b} := \sum_{i=0}^N h(s_i,s_i),$$ which is clearly independent of the choice of an admissible normal basis of index $b$ on $(V, m)$.

\begin{defn}\label{d:balanced}
A $T$-invariant hermitian metric $h$ on $L$ whose curvature $\omega$ is K\"ahler metric on $X$ is called {\it balanced relative to $T$ of index $b$} if the function $E_{h, b}$ is constant for any admissible normal basis of index $b$. The curvature $\omega$ of $h$ is called a balanced K\"ahler metric of index $b$ relative to $T$.
\end{defn}

The definition above has the following useful interpretation in terms of the K\"ahler geometry of $X$. Consider the space ${\mathcal B}^T(V)$ of bases of $V=H^0(M,L)$, which are {\it compatible} with the splitting  \eqref{e:split}, i.e. which are admissible normal bases for some $T$-invariant hermitian inner product $m$. If ${\bf s}= \{ s_0, \cdots, s_N\}$ is an element of ${\mathcal B}^T(V)$ and $h$ is any $T$-invariant hermitian metric on $L$, we put 
\begin{equation}\label{hs}
h_{\bf s} =  \frac{h}{\sum_{i=0}^N h(s_i, s_i)},
\end{equation}
which is manifestly independent of the auxiliary hermitian metric $h$ on $L$. 

Any basis ${\bf s}=\{s_0, \cdots, s_N\}$  in ${\mathcal B}^T(V)$ determines a $T$-invariant hermitian inner product $m_{\bf s}$ on $V$ (and $V^*$) such that $\bf s$ (resp. the dual basis ${\bf s}^*$) is admissible and orthonormal. The identification ${\bf s}^* : P(V^*) \cong \C P^{N}$ determines a Fubini--Study metric $\omega_{\rm FS, {\bf s}}$ on $P(V^*)$,  representing $2\pi \cO(1)$;  we denote by $\omega_{X,{\bf s}}= \kappa^* (\omega_{\rm FS, {\bf s}})$ the induced K\"ahler form on $X$ via the Kodaira embedding $\kappa$. Note that $\omega_{X,{\bf s}}$ is the curvature of the hermitian metric $h_{\bf s}$ on $L$ defined by \eqref{hs} and if $\omega$ is the curvature of $h$, it is easily seen that 
\begin{equation}\label{potential}
\omega_{X,{\bf s}} = \omega + \frac{1}{2} dd^c \log (\sum_{i=0}^N h(s_i,s_i)).
\end{equation}
One therefore obatins
\begin{lemma}\label{characterization}
A $T$-invariant hermitian metric $h$ on $L$ is balanced relative to $T$ of index $b$  if and only if with respect to any admissible orthonormal basis ${\bf s}$ of the hermitian inner product $m_{h,b}$ on $V$,  defined by rescaling $\langle \cdot, \cdot \rangle_h$ on each space $V(\chi_k)$ by a factor $1/b_k^2$,  $h_{\bf s}= \lambda h$  for some positive constant $\lambda$. 
\end{lemma}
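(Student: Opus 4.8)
The plan is to deduce the equivalence directly from the identity \eqref{hs}. For any basis ${\bf s}=\{s_0,\dots,s_N\}$ in ${\mathcal B}^T(V)$ one has $h_{\bf s}=\frac{h}{\sum_{i=0}^N h(s_i,s_i)}$, so $h_{\bf s}=\lambda h$ for a positive constant $\lambda$ if and only if the positive smooth function $\sum_{i=0}^N h(s_i,s_i)$ is constant on $X$; equivalently, by \eqref{potential}, $h_{\bf s}=\lambda h$ means $\omega_{X,{\bf s}}=\omega$, which forces $\log\sum_{i=0}^N h(s_i,s_i)$ to be pluriharmonic and hence constant, $X$ being compact. Thus the lemma reduces to showing that, for an admissible orthonormal basis ${\bf s}$ of $m_{h,b}$, the function $\sum_{i=0}^N h(s_i,s_i)$ is constant precisely when $E_{h,b}$ is.

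To see this I would compare the two functions. First, $m_{h,b}$ is $T$-invariant, being obtained from the $T$-invariant inner product $\langle\cdot,\cdot\rangle_h$ by rescaling on the weight spaces $V(\chi_k)$; hence it admits admissible orthonormal bases, any two of which differ by a unitary transformation preserving each $V(\chi_k)$. Under such a transformation $\sum_{i=0}^N h(s_i,s_i)$ is unchanged, so this function---and therefore, by \eqref{hs}, the metric $h_{\bf s}$ itself---is independent of the particular admissible orthonormal basis of $m_{h,b}$. Next I pass between the two normalizations by rescaling within each weight space: starting from an admissible orthonormal basis $\{s_{k,i}\}$ of $m_{h,b}$ and replacing $s_{k,i}$ by $c_k\,s_{k,i}$, with the constants $c_k$ determined solely by $b$ (through the definition of $m_{h,b}$ and the normalization $\sum_k n_k b_k=N+1$), yields an admissible normal basis of index $b$ for $\langle\cdot,\cdot\rangle_h$. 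It follows that $\sum_{i=0}^N h(s_i,s_i)$ and $E_{h,b}$ differ by a fixed positive multiplicative constant, so one is constant on $X$ if and only if the other is.

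Putting the two steps together yields both implications simultaneously: $h$ is balanced relative to $T$ of index $b$ $\Longleftrightarrow$ $E_{h,b}$ is constant $\Longleftrightarrow$ $\sum_{i=0}^N h(s_i,s_i)$ is constant for one (equivalently every) admissible orthonormal basis ${\bf s}$ of $m_{h,b}$ $\Longleftrightarrow$ $h_{\bf s}=\lambda h$. The step requiring the most care is the second one: keeping track of the rescaling constants relating ``orthonormal for $m_{h,b}$'' to ``index $b$ for $\langle\cdot,\cdot\rangle_h$'', and verifying that the quotient of the two Bergman-type functions is a genuine constant on $X$, not merely a weight-space-dependent factor. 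Once this bookkeeping is settled, the lemma follows formally from \eqref{hs} and the already-noted independence of $E_{h,b}$ from the admissible normal basis.
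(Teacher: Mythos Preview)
Your overall strategy---reduce $h_{\bf s}=\lambda h$ to constancy of $\sum_i h(s_i,s_i)$ via \eqref{hs}, then compare this sum with $E_{h,b}$---is exactly what the paper has in mind; the paper in fact gives no proof, writing only ``One therefore obtains'' before the lemma, because the statement is meant to be immediate from the definitions and \eqref{hs}--\eqref{potential}.

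The genuine gap in your write-up is precisely the step you yourself flag as ``requiring the most care''. You pass from an admissible orthonormal basis $\{s_{k,i}\}$ of $m_{h,b}$ to an admissible normal basis of index $b$ for $\langle\cdot,\cdot\rangle_h$ by rescalings $s_{k,i}\mapsto c_k s_{k,i}$, and then assert that the two Bergman-type sums ``differ by a fixed positive multiplicative constant''. But if the $c_k$ genuinely depend on $k$, this is simply false: writing $E_k=\sum_i h(c_k s_{k,i},c_k s_{k,i})$ one gets
\[
\sum_{k,i} h(s_{k,i},s_{k,i})=\sum_k c_k^{-2}\,E_k,\qquad E_{h,b}=\sum_k E_k,
\]
and these are not proportional as functions on $X$ unless all $c_k$ coincide; constancy of one does not force constancy of the other. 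So the ``bookkeeping'' you defer is not a formality---taken at face value your argument does not close.

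The resolution, and the reason the paper regards the lemma as obvious, is that \emph{no rescaling is needed}: the definition of $m_{h,b}$ is arranged so that an admissible orthonormal basis of $m_{h,b}$ is \emph{already} an admissible normal basis of index $b$ for $\langle\cdot,\cdot\rangle_h$ (read the scaling factor as $1/b_k$; the exponent $2$ in the text is a slip, since with $1/b_k^2$ one would get $\langle s_{k,i},s_{k,i}\rangle_h=b_k^2\neq b_k$). With that identification one has $\sum_i h(s_i,s_i)=E_{h,b}$ on the nose, and the equivalence follows instantly from \eqref{hs}. Once you make this observation, your paragraphs 1 and 2 already contain the complete proof; paragraph 3 (the rescaling) should be deleted rather than justified.
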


\smallskip
In order to give further motivation for the above notions, we now briefly recall the (finite dimensional) momentum map interpretation given by S.~K.~Donaldson~\cite{Do-one,D1}, and subsequently studied in \cite{PS,W}.

On the space ${\mathcal B}^T(V)$ the following groups act naturally:
\begin{enumerate}
\item[$\bullet$] ${\mathbb C}^{\times}$ by scalar multiplications;
\item[$\bullet$] $\rho(Z_{{\rm Aut}_0(X,L)}(T))$ where $Z_{{\rm Aut}_0(X,L)}(T)$ is the connected component of identity of the centralizer of $T$ in ${\rm Aut}_0(X,L)$;
\item[$\bullet$] $G_T={\rm S}(\prod_{k=1}^{\nu} {\rm U}(n_k))$, which is also a connected component of the centralizer  of $\rho(T)$  in ${\rm SU}(N+1)$.
\end{enumerate}
As the actions of ${\mathbb C}^{\times}$ and $\rho(Z_{{\rm Aut}_0(X,L)}(T))$ commute with the action of $G_T$, we can consider the quotient space ${\mathcal Z}^T(V) = {\mathcal B}^T(V) / \big({\mathbb C}^{\times} \times \rho(Z_{{\rm Aut}_0(X,L)}(T))\big)$ on which $G_T$ acts with stabilizer (of every point) $G_T \cap \rho(Z_{{\rm Aut}_0(X,L)}(T))$; in our setting $\rho(T) \subset  G_T \cap \rho(Z_{{\rm Aut}_0(X,L)}(T))$.  Following \cite{Do-one,PS},  there is a K\"ahler structure on ${\mathcal Z}^T(V)$, whose definition uses the fact that any point ${\bf s}= \{s_{k,i}, \ k=1,\ldots \nu,  \ i=1, \ldots, n_k\}$ of ${\mathcal B}^T(V)$ defines an embedding of $\Phi_{\bf s} : X \hookrightarrow {\mathbb C}P^N$. With respect to this K\"ahler structure $G_T$ acts isometrically with momentum map given (up to a non-zero multiplicative constant) by
$$\mu_{G_{T}}({\bf s}) = i \  \Big( \bigoplus_{k=1}^{\nu}  ( \langle s_{k,i}, s_{k,j}  \rangle_{h_{\bf s}}) \Big)_0, $$
where the $( \cdot )_0$ denotes the traceless part of the matrix (the Lie algebra ${\rm su}(N+1)$ being identified with its dual using the positive definite Killing form), and with complexification  $G_T^c= {\rm S}(\prod_{k=1}^{\nu}{\rm GL}(n_k, \C))$. It follows that ${\bf s}$ is a zero of the momentum map $\mu_{G_T}$ if and only if $h_s$ is a balanced metric of index $b=(1, \ldots, 1)$ relative to $T$; such a metric is also balanced with respect to the trivial torus $T=\{ {\rm Id} \}$ (of index $1$). This is the classic notion of balanced embedding studied in \cite{L,Z}. It follows from these works that the existence of a balanced basis ${\bf s}$ is equivalent to the Chow polystability of the variety $(X,L)$,  which we briefly recall: Let $d$ be the degree of the image $\kappa (X) \subset P(V^*)$ under the Kodaira embedding. Any element $h=(h_0, \cdots, h_{n})$ of $P(V) \times \cdots \times   P(V)$ ($(n+1)$-times) is seen as $(n+1)$ hyper-planes in $P(V^*)$, and
$$\{ h \in P(V) \times \cdots \times   P(V) : h_0 \cap \cdots \cap h_n \cap \kappa(X) \neq 0\}$$
becomes a divisor in $P(V) \times \cdots \times P(V)$ defined by a polynomial $\hat X \in W=\big({\rm Sym}^d(V^*)\big)^{\otimes (n+1)}$, called a {\it Chow line} and determined up to a non-zero scale; the corresponding element $[\hat X] \in P(W)$ is the {\it Chow point} associated to $(X,L)$.
\begin{defn}\label{chow} The polarized variety $(X,L)$ is called {\it Chow polystable} if the orbit of $\hat X$ in $W=\big({\rm Sym}^d(V^*)\big)^{\otimes (n+1)}$ under the natural action of ${\rm SL}(V)$ is closed. $(X,L)$ is called {\it asymptotically Chow polystable} if $(X,L^k)$ is Chow polystable for any $k\gg1$.
\end{defn}
The result of H.~Luo~\cite{L} and S.~Zhang~\cite{Z} (see also \cite[Theorem~A]{M1}) then states
\begin{thm}\label{luo-zhang} A compact polarized projective complex manifold $(X,L)$ is Chow polystable if and only if $L$ admits a balanced hermitian metric $h$ {\rm (}of index $1$ relative to $T=\{ {\rm Id} \}${\rm )}.
\end{thm}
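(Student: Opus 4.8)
The plan is to deduce the statement from the finite-dimensional Kempf--Ness correspondence applied to the Chow point $\hat X$. I would fix an auxiliary Hermitian inner product on $V$, thereby identifying $V\cong\C^{N+1}$, realizing ${\rm SU}(V)\subset{\rm SL}(V)$, and inducing a Hermitian norm $\|\cdot\|$ on $W=\big({\rm Sym}^d(V^*)\big)^{\otimes(n+1)}$ for which ${\rm SU}(V)$ acts unitarily. The central object is the Chow norm functional $\Psi(\sigma)=\log\|\sigma\cdot\hat X\|$ on ${\rm SL}(V)$; it is invariant under the left ${\rm SU}(V)$-action, hence descends to the symmetric space ${\rm SL}(V)/{\rm SU}(V)$, and $t\mapsto\Psi(e^{tA}\sigma)$ is convex for every Hermitian $A$, being the logarithm of a sum $\sum_j c_j e^{2\lambda_j t}$ with $c_j\ge 0$, and strictly convex transversally to the stabilizer of $\hat X$.

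The key step, and the one I expect to be the main obstacle, is the computation of the first variation $\frac{d}{dt}\big|_{t=0}\Psi(e^{tA})$ for traceless Hermitian $A$, and its identification --- up to a positive universal constant --- with the pairing of $A$ against the traceless Gram matrix $\mu({\bf s})=i\big(\langle s_i,s_j\rangle_{h_{\bf s}}\big)_0$ of the Bergman-normalized sections over $X$, where ${\bf s}$ is an orthonormal basis for the inner product transported by $\sigma$. This requires the explicit description of the Chow divisor: differentiating the defining section of $\{(h_0,\dots,h_n):h_0\cap\cdots\cap h_n\cap\kappa(X)\neq\emptyset\}$ and evaluating the resulting residue/intersection integral over $X$ (as in \cite{Z} and \cite{PS}) yields that the gradient of $\log\|\hat X\|$ equals $c\int_X\big(h_{\bf s}(s_i,s_j)\big)\,\omega_{X,{\bf s}}^n$ for an explicit constant $c$ depending only on $d$, $N+1$ and $\int_X\omega^n$; imposing $\trace A=0$ then produces exactly $\mu({\bf s})$. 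It follows that the critical points of $\Psi$ on ${\rm SL}(V)/{\rm SU}(V)$ are precisely those $\sigma$ whose associated orthonormal basis ${\bf s}$ satisfies $\sum_i h_{\bf s}(s_i,s_i)\equiv{\rm const}$, i.e., by Lemma~\ref{characterization} with $b=(1,\dots,1)$, those for which $h_{\bf s}$ is a balanced metric of index $1$.

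Granting this identification, the theorem follows by formal GIT arguments. If $L$ admits a balanced metric $h$, I pick $\sigma$ whose associated orthonormal basis is balanced; then $\sigma$ is a critical, hence by convexity a global minimum, point of $\Psi$, so $\Psi$ is bounded below, the orbit ${\rm SL}(V)\cdot\hat X$ does not accumulate at $0$, and (again via geodesic convexity, in the form of the standard fact that a moment-map zero has closed orbit) ${\rm SL}(V)\cdot\hat X$ is closed in $W$; thus $(X,L)$ is Chow polystable. Conversely, if ${\rm SL}(V)\cdot\hat X$ is closed, then $0\notin\overline{{\rm SL}(V)\cdot\hat X}$, so $\inf_\sigma\|\sigma\cdot\hat X\|>0$ and a minimizing sequence subconverges inside the closed orbit to a nonzero point $\sigma_\infty\cdot\hat X$ of minimal norm; the minimizer $\sigma_\infty$ is then a critical point of $\Psi$, and by the previous paragraph the orthonormal basis attached to $\sigma_\infty$ furnishes a balanced metric on $L$. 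Finally I would note that changing the auxiliary inner product on $V$ moves $\hat X$ within its ${\rm SL}(V)$-orbit up to scale, so that both Chow polystability and the existence of a balanced metric depend only on the isomorphism class of $(X,L)$ and are independent of all the auxiliary choices made above.
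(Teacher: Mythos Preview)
The paper does not actually prove this theorem: it is stated as a result of Luo~\cite{L} and Zhang~\cite{Z} (with a reference also to \cite[Theorem~A]{M1}), and no self-contained proof is given. However, Remark~\ref{chow-norm} sketches exactly the argument you propose: it notes that the functional ${\mathbb D}$ coincides, up to positive scale and an additive constant, with $\log\|\cdot\|_{{\rm CH},m_0}$ on $G^c(V)/G(V,m_0)$ (this is the identification you call the ``key step'', established by comparing the first-variation formula \eqref{D'} with Zhang's computation), that by Proposition~\ref{cha} its critical points are precisely the balanced metrics, and that the Kempf--Ness principle (closed orbit $\Leftrightarrow$ existence of a critical point of the log-norm) then yields the equivalence. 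Your proposal is therefore correct and follows the same route the paper indicates; the only point the paper treats as a black box --- and which you likewise defer to \cite{Z} and \cite{PS} --- is the actual derivation of the first-variation formula for the Chow norm in terms of the integral $\int_X h_{\bf s}(s_i,s_j)\,\omega_{X,{\bf s}}^n$.
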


The relevance of balanced metrics to our work comes from the following central result in the theory, proved by  S.~K.~Donaldson~\cite{Do-one} in the case when ${\Aut}_0(X,L)$  is trivial,  and  extended by T.~Mabuchi~\cite{M2} to the general case.
\begin{thm}\label{do-mabuchi} Let $(X,L)$ be an asymptotically Chow polystable compact polarized projective manifold and $\omega \in 2\pi c_1(L)$ a K\"ahler metric metric of constant scalar curvature~\footnote{A. Futaki~\cite{futaki-chow} showed that the extremal K\"ahler metrics in the K\"ahler class of an asymptotically Chow stable polarization must be CSC.}. Then, there exist sequences of integers $m_k \to \infty$  and hermitian metrics $h_k$ on $L^{m_k}$,  with curvatures $\omega_k$, which are balanced {\rm (}relative to $T=\{ 1 \}$ of index $b=1${\rm )},  and such that $\frac{1}{m_k}\omega_k$ converge  in $\cC^{\infty}$ to $\omega$ as $k \to \infty$. \end{thm}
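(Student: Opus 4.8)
The plan is to follow the quantisation scheme of S.~K.~Donaldson~\cite{Do-one} (for ${\rm Aut}_0(X,L)=\{1\}$) and of T.~Mabuchi~\cite{M2} (in general). A balanced hermitian metric on $L^m$ corresponds exactly to a zero of the finite--dimensional momentum map $\mu_{G_m}$ on ${\mathcal B}(V_m)$ --- the $T=\{{\rm Id}\}$, index $b=1$ instance of the picture reviewed above, with $V_m=H^0(X,L^m)$ and $G_m={\rm SU}(N_m+1)$. So it is enough to exhibit, for $m\gg1$, a basis which is $\cC^{\infty}$--close to the Kodaira embedding determined by $\omega$ and on which $\mu_{G_m}$ is extremely small, and then to deform it to an exact zero.

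\emph{Approximately balanced embeddings.} Let $h$ be a hermitian metric on $L$ with curvature $\omega$ and, for $m\gg1$, let ${\bf s}^{(m)}$ be an $L^2$--orthonormal basis of $\big(V_m,\langle\cdot,\cdot\rangle_{h^{\otimes m}}\big)$; the failure of the corresponding embedding $\Phi_{{\bf s}^{(m)}}$ to be balanced is measured by the Bergman function $\rho_m=\sum_i h^{\otimes m}(s^{(m)}_i,s^{(m)}_i)$. By the Catlin--Zelditch expansion~\cite{catlin,zeldich}, with coefficients computed by Z.~Lu~\cite{Lu} and $\cC^{\infty}$ bounds supplied by W.~Ruan~\cite{ruan}, one has $\rho_m=m^n+\tfrac12\,\Scal(\omega)\,m^{n-1}+O(m^{n-2})$ in every $\cC^k$; since $\omega$ has constant scalar curvature, $\rho_m$ is constant modulo $O(m^{n-2})$, and hence ${\bf s}^{(m)}$, after renormalising by the mean of $\rho_m$, is balanced up to an error $O(m^{-2})$. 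To create the room needed by the correction step I would, following Donaldson, refine this: one builds a formal series $\omega_{(m)}=\omega+m^{-1}\eta_1+m^{-2}\eta_2+\cdots$ whose Bergman functions are constant to arbitrarily high order in $1/m$, the obstruction to solving for $\eta_j$ at each stage being a linear equation for the Lichnerowicz operator $\mathcal D^*\mathcal D$. When ${\rm Aut}_0(X,L)=\{1\}$ this operator is invertible on functions of mean zero and the iteration proceeds freely; in general one must allow in addition a perturbation by elements of $\rho\big({\rm Aut}_0(X,L)\big)$, and the solvability at each order is then precisely guaranteed by the vanishing of the integral Futaki invariants of $(X,L)$, which follows from asymptotic Chow polystability by A.~Futaki~\cite{futaki-chow}. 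Truncating at a suitable order $R$ yields, for $m\gg1$, a basis $p_m\in{\mathcal B}(V_m)$ with $|\mu_{G_m}(p_m)|=O(m^{-R})$ and $\tfrac1m\Phi_{p_m}^*(\omega_{\rm FS})\to\omega$ in $\cC^{\infty}$.

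\emph{Correction to an exact zero.} The analytic heart is Donaldson's control of the linearisation $P_m$ of $\mu_{G_m}$ at $p_m$: after the natural rescaling the generalised Laplacian governing $P_m$ has a spectral gap (its first nonzero eigenvalue is bounded below independently of $m$), and its kernel converges, as $m\to\infty$, to the subspace of ${\rm su}(N_m+1)$ induced by the $\rho\big({\rm Aut}_0(X,L)\big)$--action. When ${\rm Aut}_0(X,L)=\{1\}$ there is no kernel, so the inverse of $P_m$ has norm growing at most polynomially in $m$; as this is beaten by the size $O(m^{-R})$ of $\mu_{G_m}(p_m)$ once $R$ is large, a quantitative inverse function theorem --- equivalently, the downward gradient flow of $|\mu_{G_m}|^2$ issued from $p_m$ --- converges to a zero ${\bf s}_m$ of $\mu_{G_m}$ at distance $O(m^{-R+1})$ from $p_m$, and $h_m:=h_{{\bf s}_m}$ is the required balanced metric on $L^m$. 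In the general case one first uses the Luo--Zhang Theorem~\ref{luo-zhang} (applicable since $(X,L)$, being asymptotically Chow polystable, has $L^m$ Chow polystable for $m\gg1$) to know that $\mu_{G_m}$ does possess a zero, and then repeats the argument for the momentum map of $G_m$ restricted to a slice through $p_m$ transverse to the $\rho\big({\rm Aut}_0(X,L)\big)$--orbit --- equivalently, on the associated symplectic reduction --- where these directions have been divided out and the spectral gap is restored uniformly in $m$; this is Mabuchi's argument in~\cite{M2}. In either case the uniform $\cC^{\infty}$ bounds carried along the construction, together with elliptic regularity, give $\tfrac1m\omega_{h_m}\to\omega$ in $\cC^{\infty}$; taking for $m_k$ any sequence of such $m$ tending to infinity proves the theorem.

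\emph{Main obstacle.} The crux, already when ${\rm Aut}_0(X,L)=\{1\}$, is the uniform lower eigenvalue bound for $P_m$: this is where one genuinely uses that $\omega$ is CSC, since a small eigenvalue would, as $m\to\infty$, produce a nonzero holomorphic vector field, which is excluded. When ${\rm Aut}_0(X,L)\neq\{1\}$ there is the additional task of identifying the asymptotic kernel of $P_m$ with the directions coming from $\rho\big({\rm Aut}_0(X,L)\big)$, checking that reducing by exactly these directions leaves the estimate intact, and completing the formal iteration in their presence --- which is precisely where asymptotic Chow polystability enters, through the vanishing of the integral Futaki invariants. The remaining ingredients --- the Bergman expansion, the formal iteration itself, the inverse function / flow argument, and the $\cC^{\infty}$ bootstrap --- are by comparison routine.
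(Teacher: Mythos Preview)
The paper does not give its own proof of this theorem: it is quoted as a known result, attributed to S.~K.~Donaldson~\cite{Do-one} in the case ${\rm Aut}_0(X,L)=\{1\}$ and to T.~Mabuchi~\cite{M2} in general, and used as a black box in the proof of Theorem~\ref{main}(i). Your outline is a faithful summary of the Donaldson--Mabuchi strategy (Bergman expansion to produce almost balanced embeddings, formal iteration to push the error to high order, and a quantitative perturbation argument relying on a uniform spectral gap to reach an exact zero of the moment map), so there is nothing to compare against within the paper itself.
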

Note that when ${\rm Aut}_0(X,L)$ is trivial, S.~K.~Donaldson also shows in \cite{Do-one} that the existence of a CSC K\"ahler metric in $2\pi c_1(L)$ implies that $(M,L)$ is asymptotically Chow polystable while it is known that the latter condition is restrictive in the case when ${\rm Aut}_0(X,L)$ is non-trivial (see e.g. \cite{OSY}). 

\smallskip
One therefore needs to further relax the condition on balanced metrics in order to find similar approximations of extremal K\"ahler metrics on  asymptotically Chow unstable varieties, and this is where the choice of  indices $b$ will come to play. Using the momentum map picture described above,  a natural approach developed in \cite{gabor, Sz}  would be,  instead of zeroes of $\mu_{G_T}$,  to study the critical points of the squared norm $||\mu_{G_T} ||^2$ (with respect to the positive definite  Killing inner product of $\mathfrak{su}(N+1)$). It follows from the moment map picture that a basis ${\bf s} \in {\mathcal B}^T(V)$ is a critical point of $||\mu_{G_T}||^2$ if and only if $\mu_{G_T}({\bf s})$ is a matrix which belongs to the Lie algebra of the stabilizer of the projection of ${\bf s}$ to ${\mathcal Z}^{T}$ for the action of $G_T$. In order to simplify the discussion, and with the application in mind, let us assume that $T$ is a maximal torus in ${\rm Aut}_0(X,L)$.  This implies that  $\rho(Z_{{\rm Aut}_0(X,L)}(T)) \cap G = \rho(T)$ i.e.  the stabilizer of any point of ${\mathcal Z}^T(V)$ is $\rho(T)$.  Therefore, a basis ${\bf s}$ is a critical point for $||\mu_{G_T}||^2$ if and only if $\mu_{G_T}({\bf s})$ is a diagonal matrix $$ i  \ {\rm diag}(a_1, \ldots, a_1, a_2, \ldots, a_2, \cdots, a_{\nu}, \ldots a_{\nu})$$ which belongs to the Lie algebra of $\rho(T)$. In other words, ${\bf s}$ defines a critical point of $||\mu_{G_T}||^2$ on ${\mathcal Z}^{T}$ if and only if the induced hermitian metric $h_{\bf s}$ on $L$ is balanced relative to $T$ of index $b=(b_1, \cdots, b_{\nu})$ with 
\begin{equation}\label{constraint}
b_k= \frac{1 +  \log |\chi_k(t)|}{1 + \sum_{\ell=1}^{\nu} \frac{n_{\ell} \log |\chi_{\ell}(t)|}{N+1}},  \ k=1, \ldots,  \nu
\end{equation} 
for some $t \in T^c$. 
The corresponding interpretation in terms of Chow stability has been worked out by T.~Mabuchi~\cite{M5} and is expressed by the closeness in $W$ of the Chow line $\hat X$ under the natural action of the group
$$G_{T^{\perp}} ^c (V)= \{ {\rm diag} (A_1, \cdots, A_{\nu}) \in \prod_{k=1}^{\nu} {\rm GL}(V({\chi_k})) :  \prod_{k=1}^{\nu} {\rm det} (A_k)^{1+\log |\chi_k(t)|}=1 \ \forall t \in T^c \}.$$
\begin{defn}\label{stability}  We call $(X,L)$ {\it Chow polystable stable relative to $T$} if the Chow line $\hat X$ associated to $(X,L)$ has a closed orbit with respect to $G_{T^{\perp}}^c(V)$; $(X,L)$ is called {\it asymptotically Chow stable relative to $T$} if  $(X, L^k)$ is  Chow polystable stable relative to $T$  for all $k\gg 1$. 
\end{defn}
\noindent We then have (cf. \cite[Theorem A]{M1} and \cite[Theorem C]{M5}) 
\begin{thm}\label{chow-stability}
A polarized  compact projective complex manifold $(X,L)$ is Chow polystable relative to $T$ if and only if $L$ admits a hermitian metric balanced relative to $T$ for some index $b$ satisfying \eqref{constraint}. 
\end{thm}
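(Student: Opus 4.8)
The plan is to read Theorem~\ref{chow-stability} as an instance of the Kempf--Ness correspondence --- closed orbits of a reductive complex group versus zeros of the associated momentum map --- applied to the linear action on the Chow line, following T.~Mabuchi~\cite{M1,M5} and reducing to Theorem~\ref{luo-zhang} of H.~Luo and S.~Zhang in the case $T=\{{\rm Id}\}$. Concretely, the group $\widehat G^c=\prod_{k=1}^\nu{\rm GL}(V(\chi_k))$ acts on $V=\bigoplus_k V(\chi_k)$, hence linearly on $W=\big({\rm Sym}^d(V^*)\big)^{\otimes(n+1)}$, preserving the Chow form $\hat X\neq 0$ up to scale, and $G_{T^\perp}^c(V)$ of Definition~\ref{stability} is a closed connected \emph{reductive} complex Lie subgroup: its Lie algebra $\{(\xi_k)\in\bigoplus_k\mathfrak{gl}(n_k):\sum_k(1+\log|\chi_k(t)|)\,{\rm tr}\,\xi_k=0\ \ \forall\,t\in T^c\}$ is the sum of $\bigoplus_k\mathfrak{sl}(n_k)$ with a subtorus of the centre, and its maximal compact $G_{T^\perp}(V):=G_{T^\perp}^c(V)\cap\prod_k{\rm U}(V(\chi_k))$ preserves a fixed Hermitian norm $\|\cdot\|$ on $W$. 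The first task is to identify the Kempf--Ness function of this action with the functional ${\mathbb D}$ of Section~\ref{s:functional}: writing $\Psi(g)=\log\|g\cdot\hat X\|^2$ on the symmetric space $G_{T^\perp}^c(V)/G_{T^\perp}(V)$, one checks as in~\cite{L,D1,Z,Do-one} that $\Psi$ agrees up to an additive constant with the restriction of ${\mathbb D}$ to the family of $T$-invariant Hermitian metrics $h_{\bf s}$ attached to the points ${\bf s}$ of the orbit, that $\Psi$ is convex along geodesics (plurisubharmonicity of the logarithm of a norm), and that its differential in the direction $\xi\in{\rm Lie}(G_{T^\perp}^c(V))$ equals the pairing $\langle\mu_{G_{T^\perp}}({\bf s}),\xi\rangle$, where $\mu_{G_{T^\perp}}$ is $\mu_{G_T}$ followed by the orthogonal projection onto $\mathfrak g_{T^\perp}$.

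The key algebraic step is to verify that $\mu_{G_{T^\perp}}({\bf s})=0$ holds exactly when the induced metric $h_{\bf s}$ on $L$ is balanced relative to $T$ of some index $b$ satisfying~\eqref{constraint}. Indeed, $\mu_{G_{T^\perp}}({\bf s})$ annihilates $\mathfrak g_{T^\perp}$ if and only if the block matrix $i\big(\bigoplus_k(\langle s_{k,i},s_{k,j}\rangle_{h_{\bf s}})\big)_0$ representing $\mu_{G_T}({\bf s})$ is (i) block-scalar, say $\langle s_{k,i},s_{k,j}\rangle_{h_{\bf s}}=\beta_k\,\delta_{ij}$ with $\beta_k>0$ --- which comes from pairing against the blockwise traceless part of $\mathfrak g_{T^\perp}$, and means precisely, by Lemma~\ref{characterization}, that $h_{\bf s}$ is balanced relative to $T$ of index $b=(b_k)$ proportional to $(\beta_k)$ --- and (ii) the vector $(\beta_k)$ lies, modulo the line $\mathbb R(1,\dots,1)$, in the span of $\{(\log|\chi_k(t)|)_k:t\in T^c\}$; combining (ii) with the volume identity $\sum_k n_k\beta_k=(N+1)\sigma$ (where $\sigma$ is the normalizing constant built into $\mu_{G_T}$) and with $\sum_k n_k\log|\chi_k(t)|=0$ (a consequence of $\prod_k\chi_k^{n_k}=1$) forces $\beta_k=\sigma\,(1+\log|\chi_k(t)|)$ for a suitable $t\in T^c$, i.e. the index $b_k=\beta_k/\sigma=1+\log|\chi_k(t)|$ --- which, since the denominator in~\eqref{constraint} equals $1$ by the same identity, is exactly~\eqref{constraint}.

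Granting these identifications, the theorem follows from the two halves of the Kempf--Ness theorem for the reductive group $G_{T^\perp}^c(V)$ acting on $W$. If $L$ carries a $T$-invariant Hermitian metric $h$ balanced relative to $T$ of an index $b$ satisfying~\eqref{constraint}, pick ${\bf s}\in{\mathcal B}^T(V)$ with $h_{\bf s}=\lambda h$ (Lemma~\ref{characterization}); then ${\bf s}$ is a critical point of $\Psi$ along the orbit $G_{T^\perp}^c(V)\cdot\hat X$, hence a minimum by convexity, so the orbit is closed and $(X,L)$ is Chow polystable relative to $T$. Conversely, if $G_{T^\perp}^c(V)\cdot\hat X$ is closed, the Kempf--Ness theorem produces a point ${\bf s}$ of the orbit at which $\|g\cdot\hat X\|$ attains its infimum; this ${\bf s}$ is a zero of $\mu_{G_{T^\perp}}$, so $h_{\bf s}$ is balanced relative to $T$ of an index $b$ satisfying~\eqref{constraint}, which completes the equivalence.

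I expect the main obstacle to be that $G_{T^\perp}^c(V)$ need not be a complex \emph{algebraic} group --- the exponents $\log|\chi_k(t)|$ are generally irrational --- so one must invoke the Kempf--Ness/Kirwan--Ness theory (convexity of $\Psi$, closedness of the orbit equivalent to attainment of the infimum, uniqueness of the minimizer modulo the compact stabilizer) in the form valid for reductive complex Lie groups acting linearly on a finite-dimensional complex vector space, and check that ``closed orbit'' in Definition~\ref{stability} is the correct topological condition in that setting. A secondary, purely bookkeeping, difficulty is to keep track of the various normalizations --- the $2\pi$ and $n!$ factors, the volume $\int_X\omega_{X,{\bf s}}^n$ versus $N+1$, the $1/b_k^2$ rescalings of $\langle\cdot,\cdot\rangle_h$ in Lemma~\ref{characterization}, and the traceless-part convention in $\mu_{G_T}$ --- so that the computation of the previous paragraph produces exactly~\eqref{constraint} rather than a rescaled variant.
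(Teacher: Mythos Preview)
Your proposal is correct and follows precisely the approach the paper adopts: the theorem is stated with attribution to Mabuchi~\cite{M1,M5}, and the paper's own justification is given in Remark~\ref{chow-norm}, where it is observed that the Kempf--Ness functional ${\mathbb D}$ coincides (up to affine rescaling) with $\log\|\cdot\|_{{\rm CH},m_0}$ on the relevant orbit, so that the existence of a critical point---equivalently, by Proposition~\ref{cha}, of a balanced metric with index satisfying~\eqref{constraint}---is equivalent to closedness of the Chow orbit under $G^c_{T^\perp}(V)$. Your explicit identification of the zero locus of $\mu_{G_{T^\perp}}$ with indices of the form~\eqref{constraint}, together with your observation that $\prod_k\chi_k^{n_k}=1$ forces the denominator in~\eqref{constraint} to equal~$1$, and your correct flagging of the non-algebraicity of $G^c_{T^\perp}(V)$ as the point where one must appeal to the analytic (rather than purely algebraic) Kempf--Ness theory, all match the framework of~\cite{M1,M5} that the paper invokes.
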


A generalization of the Kempf--Ness theorem (see \cite[Theorem 3.5]{gabor} or \cite[Theorem 1.3.4]{Sz}) in the momentum map set up provides us with the following useful result.
\begin{lemma}\label{gabor} Let $(X,L)$ be a compact polarized projective complex manifold and $T$ a maximal torus in ${\rm Aut}_0(X,L)$. Then $L$ admits a hermitian metric balanced relative to $T$ of some index $b$ satisfying \eqref{constraint} if and only if the orbit of some {\rm (}and hence each{\rm )} point of ${\mathcal B}^T(V)$ under the action of the group $$G^c_{T^\perp}=\{ {\rm diag} (A_1, \cdots, A_{\nu}) \in \prod_{k=1}^{\nu} {\rm GL}(n_k, \C)) :  \prod_{k=1}^{\nu} {\rm det} (A_k)^{1+\log |\chi_k(t)|}=1 \ \forall t \in T^c \}$$ contains a basis ${\bf s}$ such that $h_{\bf s}$ is balanced with respect to $T$ of index satisfying \eqref{constraint}. Furthermore, any two balanced hermitian metrics  relative to $T$ with indices satisfying \eqref{constraint} are homothetic under the action of ${\rm Aut}_0(X,L)$. \end{lemma}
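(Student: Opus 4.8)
The plan is to deduce both halves of the lemma from the finite-dimensional momentum map picture recalled above, together with the generalized Kempf--Ness theorem of \cite[Theorem~3.5]{gabor} (equivalently \cite[Theorem~1.3.4]{Sz}). The first step is to recast the condition ``$h$ balanced relative to $T$ of some index $b$ satisfying \eqref{constraint}'' as the vanishing of a momentum map. Let $G_{T^\perp} := G^c_{T^\perp}\cap G_T$ be the compact subgroup of $G_T$ underlying $G^c_{T^\perp}$, with Lie algebra $\mathfrak{g}_{T^\perp} = \{{\rm diag}(\xi_1,\dots,\xi_\nu)\in\mathfrak{g}_T : \sum_{k=1}^{\nu}{\rm tr}(\xi_k)\,\log|\chi_k(t)| = 0 \ \mbox{ for all }\ t\in T^c\}$, and let $\mu_{G_{T^\perp}} = \pi\circ\mu_{G_T}$ be the momentum map of the induced $G_{T^\perp}$-action on ${\mathcal B}^T(V)$, $\pi\colon\mathfrak{g}_T^{*}\to\mathfrak{g}_{T^\perp}^{*}$ being the restriction. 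Unwinding Lemma~\ref{characterization} and the discussion preceding \eqref{constraint} (where the maximality of $T$ enters, via $\rho(Z_{{\rm Aut}_0(X,L)}(T))\cap G_T=\rho(T)$, to force the stabilizers along ${\mathcal Z}^T(V)$ to reduce to $\rho(T)$), the metric $h_{\bf s}$ on $L$ is balanced relative to $T$ of some index satisfying \eqref{constraint} exactly when $\mu_{G_T}({\bf s})$ is a block-scalar matrix lying in the Lie algebra of $\rho(T)$; as this Lie algebra is precisely $\mathfrak{g}_{T^\perp}^{\perp}\cap\mathfrak{g}_T$ (here one uses $\prod_k\chi_k^{n_k}=1$), this holds if and only if $\mu_{G_{T^\perp}}({\bf s})=0$.

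Next I would apply the Kempf--Ness theorem to the Hamiltonian action of the compact group $G_{T^\perp}$: along any single $G^c_{T^\perp}$-orbit in ${\mathcal B}^T(V)$ the zero set of $\mu_{G_{T^\perp}}$ is either empty or a single $G_{T^\perp}$-orbit, and it is non-empty precisely when that orbit is polystable in the associated GIT sense --- which for the orbit of ${\bf s}$ is the closedness of the Chow line $\hat X$ under $G^c_{T^\perp}$ of Definition~\ref{stability} (cf. Theorem~\ref{chow-stability}). To upgrade ``some point'' to ``each point'' of ${\mathcal B}^T(V)$, I would use that ${\mathcal B}^T(V)$ is a single orbit of $\prod_{k=1}^{\nu}{\rm GL}(n_k,\C)$, that $\C^\times$ (scalars) and $\rho(T^c)$ lie in its centre, and that (using $\prod_k\chi_k^{n_k}=1$) one has the factorization $\prod_{k=1}^{\nu}{\rm GL}(n_k,\C) = G^c_{T^\perp}\cdot\big(\C^\times\cdot\rho(T^c)\big)$; combined with the easily-checked identities $h_{\lambda\,{\bf s}}=|\lambda|^{-2}h_{\bf s}$, $h_{\rho(\sigma^{-1})\cdot{\bf s}}=\sigma^{*}h_{\bf s}$ and $\mu_{G_T}(\lambda\,\rho(\sigma^{-1})\cdot{\bf s})=\mu_{G_T}({\bf s})$ for $\lambda\in\C^\times$ and $\sigma\in T^c$, this shows the zero locus $\{\mu_{G_{T^\perp}}=0\}$ is stable under the central factor $\C^\times\cdot\rho(T^c)$, so that two $G^c_{T^\perp}$-orbits either both meet it or both miss it. Together with the first step, this proves the equivalence.

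For the uniqueness assertion, given $h,h'$ on $L$ balanced relative to $T$ of indices satisfying \eqref{constraint}, Lemma~\ref{characterization} produces ${\bf s},{\bf s}'\in{\mathcal B}^T(V)$ with $h_{\bf s}=\lambda h$, $h_{{\bf s}'}=\lambda' h'$ and $\mu_{G_{T^\perp}}({\bf s})=\mu_{G_{T^\perp}}({\bf s}')=0$. Using the factorization write ${\bf s}' = g_\perp\cdot\big(\lambda_0\,\rho(\tau^{-1})\cdot{\bf s}\big)$ with $g_\perp\in G^c_{T^\perp}$, $\lambda_0\in\C^\times$, $\tau\in T^c$; by the identities just quoted $\lambda_0\,\rho(\tau^{-1})\cdot{\bf s}$ is again a zero of $\mu_{G_{T^\perp}}$ lying in the $G^c_{T^\perp}$-orbit of ${\bf s}'$, so by the ``single orbit'' part of Kempf--Ness it differs from ${\bf s}'$ by some $u\in G_{T^\perp}\subset{\rm U}(N+1)$. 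Since a unitary change of basis does not affect the normalized metric $h_{(\cdot)}$, we obtain $h_{{\bf s}'}=h_{\lambda_0\,\rho(\tau^{-1})\cdot{\bf s}}=|\lambda_0|^{-2}\lambda\,\tau^{*}h$, whence $h'=c\,\tau^{*}h$ for a positive constant $c$ and $\tau\in T^c\subset{\rm Aut}_0(X,L)$; that is, $h$ and $h'$ are homothetic under ${\rm Aut}_0(X,L)$.

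The step I expect to be the main obstacle is the first one: establishing, with the correct signs and normalizations, that ``balanced relative to $T$ of index satisfying \eqref{constraint}'' is \emph{literally} the vanishing of $\mu_{G_{T^\perp}}$, and checking that the mildly non-algebraic group $G^c_{T^\perp}$ --- its exponents $1+\log|\chi_k(t)|$ being real rather than rational --- is genuinely the complexification $G_{T^\perp}\exp(i\,\mathfrak{g}_{T^\perp})$ of the compact group $G_{T^\perp}$, in the precise sense in which the convexity arguments underlying \cite[Theorem~3.5]{gabor} require it. Once this dictionary and the factorization $\prod_{k=1}^{\nu}{\rm GL}(n_k,\C)=G^c_{T^\perp}\cdot\big(\C^\times\cdot\rho(T^c)\big)$ are in place, both halves of the lemma follow formally.
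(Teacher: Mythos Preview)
Your proposal is correct and follows essentially the same route as the paper: both derive the lemma from the generalized Kempf--Ness theorem of \cite[Theorem~3.5]{gabor}/\cite[Theorem~1.3.4]{Sz}, together with the observation that the $T^c$-action (times scalars) carries one $G^c_{T^\perp}$-orbit to another while preserving the condition ``balanced relative to $T$ of index satisfying \eqref{constraint}'' --- exactly what the paper phrases as ``the $T^c$ action generates balanced metrics relative to $T$ \dots and the corresponding admissible bases \dots exhaust the $G^c_{T^\perp}$ orbits of ${\mathcal Z}^T(V)$.'' For uniqueness within a single orbit the paper points to Proposition~\ref{convex} (convexity of ${\mathbb D}$) rather than to the abstract Kempf--Ness uniqueness statement you invoke, but these are the same convexity argument.
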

It is not difficult to give a direct prove of Lemma~\ref{gabor},   once one knows the relevant identities to use. The uniqueness part follows from the fact that the $T^c$ action generates balanced metrics relative to $T$  (of some index $b$ satisfying \eqref{constraint}) and the corresponding admissible bases of  index $b$ (see Lemma~\ref{characterization}) exhaust the $G^c_{T^\perp}$ orbits of ${\mathcal Z}^T(V)$; one can then apply Proposition~\ref{convex} in Section~\ref{s:functional}.  In particular, the index $b$ in Lemma~\ref{gabor} is uniquely determined.

\smallskip
In view of the discussion above, the following provides a natural scope of a generalization of Theorem~\ref{do-mabuchi}.
\begin{conj}\label{relative-stability} Let $(X,L)$ be a  compact polarized projective manifold and $\omega \in 2\pi c_1(L)$ an extremal K\"ahler metric which, without loss, is  invariant under a maximal torus $T\subset {\rm Aut}_0(X,L)$.  Then $(M,L)$ is asymptotically Chow polystable relative to $T$,  and there exists a sequence of integers $m_k \to \infty$ and $T$-invariant hermitian metrics $h_k$ on $L^{m_k}$ with curvatures $\omega_k$, which are balanced relative to $T$ of indices $b_k$ satisfying \eqref{constraint},  such that the corresponding relative balanced K\"ahler metrics $\frac{1}{m_k}\omega_k$ on $X$ converge in $\cC^{\infty}$ to $\omega$ as $k \to \infty$.
\end{conj}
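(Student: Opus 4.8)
We outline a program for proving Conjecture~\ref{relative-stability}, modelled on Donaldson's construction of balanced metrics \cite{Do-one} and Mabuchi's relative version \cite{M1,M2,M3}, carried out in the $T$-relative moment map picture of Section~\ref{s:relative balanced} uniformly along the sequence of spaces $V_k=H^0(X,L^k)$, with $T$ fixed and regarded via \eqref{representation} as a subtorus of each ${\rm SL}(V_k)$. Fix a $T$-invariant extremal $\omega\in 2\pi c_1(L)$ and a $T$-invariant hermitian metric $h$ on $L$ with curvature $\omega$, put $h_k=h^{\otimes k}$, and let $\rho_k$ be the Bergman function of $(L^k,h_k,\omega)$. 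By Lemma~\ref{characterization}, producing an $h_k$ balanced relative to $T$ of index $b_k$ is equivalent to finding, inside the $G^c_{T^\perp}$-orbit of the admissible basis attached to $h_k$, a point $\mathbf{s}$ with $\mu_{G_T}(\mathbf{s})$ diagonal and lying in $\mathrm{Lie}(\rho(T))$ --- that is, a critical point of $\|\mu_{G_T}\|^2$ on ${\mathcal Z}^T(V_k)$; the index of such an $\mathbf{s}$ is then automatically of the constrained form \eqref{constraint}. So the equation to be solved, for each large $k$, is $\mu^{\perp}_{G_T}(\mathbf{s})=0$, where $\mu^{\perp}$ denotes the component of $\mu_{G_T}$ orthogonal to $\mathrm{Lie}(\rho(T))$.

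\emph{Step 1 (approximate solution).} One first shows that, because $\omega$ is extremal, the basis attached to $h_k$ is an approximate solution to any prescribed order in $1/k$. The inputs are the $T$-equivariant Bergman kernel expansion $\rho_k=k^n+\tfrac12\,S(\omega)\,k^{n-1}+O(k^{n-2})$ in $\cC^\infty$ (\cite{catlin,zeldich}, coefficients computed in \cite{Lu}), together with the fact that, $T$ being a maximal torus of $\Aut_0(X,L)$ --- which is reductive since $2\pi c_1(L)$ carries the extremal metric $\omega$ \cite{cal,Le-Sim} --- and $\omega$ being extremal, the Killing field $\grad_\omega S(\omega)$ is tangent to $T$, so that $S(\omega)$ minus its average is a Killing potential of $T$. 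Consequently the order-$k^{n-1}$ deviation of $\rho_k$ from a constant lies in the $\mathrm{Lie}(\rho(T))$-direction and is absorbed into a readjustment of the index within the family \eqref{constraint}, so that $\mu^{\perp}_{G_T}$ already vanishes to leading order; iterating, one applies successive $G^c_{T^\perp}$-corrections to $h_k$, splitting each correction into its $\mathrm{Lie}(\rho(T))$-part (which moves $b_k$ inside \eqref{constraint}) and its orthogonal part, and pushes the residual below $k^{-Q}$ for any prescribed $Q$.

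\emph{Step 2 (Newton iteration).} One then upgrades the approximate solution to an exact one by a contraction argument on ${\mathcal Z}^T(V_k)$. The linearization of $\mathbf{s}\mapsto\mu^{\perp}_{G_T}(\mathbf{s})$ along the $G^c_{T^\perp}$-action, rescaled by $1/k$, converges as $k\to\infty$ to a $T$-relative version of the Lichnerowicz operator of $\omega$ acting on the $L^2$-complement of the Killing potentials of $T$; this operator is invertible on that complement precisely because $\omega$ is extremal and $T$ is maximal (so that, in the notation of Section~\ref{s:relative balanced}, the stabilizer of every point of ${\mathcal Z}^T(V)$ for the $G_T$-action reduces to $\rho(T)$). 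The step I expect to be the main obstacle is the \emph{uniform in $k$} invertibility of this linearized operator: one needs the $T$-relative analogue of Donaldson's lower bound on the first nonzero eigenvalue of the rescaled operator (the $R$-estimate of \cite{Do-one}), together with $k$-independent quadratic estimates for the nonlinear remainder on ${\mathcal Z}^T(V_k)$. Granting these, the contraction mapping theorem yields $T$-invariant hermitian metrics $h_k$ on $L^{m_k}$ balanced relative to $T$ of index satisfying \eqref{constraint} with $\tfrac1{m_k}\omega_k\to\omega$ in $\cC^\infty$; asymptotic Chow polystability of $(X,L)$ relative to $T$ then follows from Theorem~\ref{chow-stability}, and the limiting index is the one singled out by Lemma~\ref{gabor}.

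\emph{A possible shortcut, and why it meets the same obstacle.} Alternatively one could take as input Mabuchi's approximation theorem \cite{M1,M2,M3}, which already produces relative balanced $h_k$ with $\tfrac1{m_k}\omega_k\to\omega$, and merely argue that the indices of these metrics asymptotically satisfy \eqref{constraint}. A relative balanced metric whose curvature is $\cC^2$-close to the extremal $\omega$ defines a near-critical point of $\|\mu_{G_T}\|^2$, and the convexity of the Kempf--Ness functional ${\mathbb D}$ along $G^c_{T^\perp}$-orbits (Section~\ref{s:functional}) should force such a point onto the family \eqref{constraint}. However, making the implication ``near-critical point $\Rightarrow$ index of the form \eqref{constraint}'' quantitative and $k$-independent again requires the $T$-relative eigenvalue bound, so the difficulty is genuinely concentrated in that single a-priori estimate along the sequence $V_k$.
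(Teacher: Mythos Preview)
The statement you are attempting to prove is labelled \emph{Conjecture~\ref{relative-stability}} in the paper, and the paper does \emph{not} prove it. Immediately after stating it, the authors write that Mabuchi \cite{M1,M2,M3} has established only a weaker version (Theorem~\ref{mabuchi}), in which the torus $T$ is taken in the centre of a maximal compact subgroup rather than a maximal torus, and the indices $b_k$ of the relative balanced metrics are \emph{not} required to satisfy the constraint \eqref{constraint}. The paper's main theorem is then proved under the extra hypotheses (i) or (ii) precisely in order to circumvent the gap between Theorem~\ref{mabuchi} and the full Conjecture~\ref{relative-stability}; see also Remark~3, where the authors note that the splitting in full generality would follow \emph{if} the conjecture were true.

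Your proposal is therefore not to be compared with a proof in the paper --- there is none --- but assessed on its own. As a program it is reasonable and in the spirit of \cite{Do-one,M1,M2,M3}, and you correctly identify the crux: the uniform-in-$k$ lower eigenvalue bound for the $T$-relative linearized operator on ${\mathcal Z}^T(V_k)$. However, you do not supply that estimate, and you explicitly flag it as the main obstacle; the ``shortcut'' you sketch also reduces to the same missing ingredient. So what you have written is an outline with an acknowledged gap, not a proof. This is consistent with the status of the statement in the paper: it is an open conjecture, and the difficulty you isolate is exactly why the authors impose the additional hypotheses (i) and (ii) in Theorem~\ref{main} rather than claiming the general case.
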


\smallskip
In a series of work \cite{M1,M2,M3}, T.~Mabuchi has established a weaker version of Conjecture~\ref{relative-stability}. The main idea is to consider instead of the group $G_T$, the smaller group $G = \prod_{k=1}^{\nu} {\rm SU}(n_k)$ which acts on ${\mathcal Z}^T(V)$ with momentum map
$$\mu_{G}({\bf s})= i \  \bigoplus_{k=1}^{\nu}  \Big( \langle s_{k,i}, s_{k,j}  \rangle_{h_{\bf s}}\Big)_0, $$
so that the zeroes of $\mu_G$ correspond to bases ${\bf s}$ in ${\mathcal B}^T(V)$ for which the hermitian metrics $h_{\bf s}$ on $L$ which are balanced relative to $T$ of some index $b$ (not necessarily satisfying \eqref{constraint}). The corresponding notion of Chow stability is then
\begin{defn}\label{weak-stability}  $(X,L)$ is {\it weakly Chow polystable stable relative to $T$} if the Chow norm $\hat X$  associated to $(X,L)$ has a closed orbit under the action of $G^c(V)= \prod_{k=1}^{\nu} {\rm SL}(V(\chi_k))$. We call $(X,L)$ {\it asymptotically weakly Chow polystable stable relative to $T$} if $(M,L^k)$ is weakly Chow polystable stable relative to $T$ for all $k\gg 1$. 
\end{defn}

The following result is extracted from \cite{M1,M2,M3}.
\begin{thm}\label{mabuchi} Let $(X,L)$ be a compact polarized projective manifold and $\omega \in 2\pi c_1(L)$ an extremal K\"ahler metric which, without loss, is invariant under a maximal compact connected subgroup $K\subset  {\Aut}_0(X,L)$. Let $T\subset K$  be any  torus in the connected component of the identity of the centre of  $K$. Then, $(M,L)$ is asymptotically weakly Chow polystable relative to $T$ and there exists a sequence of integers $m_k \to \infty$ and $T$-invariant hermitian metrics $h_k$ on $L^{m_k}$ with curvatures $\omega_k$, which are balanced relative to $T$,  such that $\frac{1}{m_k}\omega_k$ on $X$ converge in $\cC^{\infty}$ to $\omega$ as $k \to \infty$. 
\end{thm}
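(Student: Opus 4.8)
The result is a synthesis of Mabuchi's work \cite{M1,M2,M3}; here is the line of argument, which extends Donaldson's proof of Theorem~\ref{do-mabuchi} to the presence of the torus $T$. After passing to a tensor power and invoking the Calabi theorem we may take $L$ very ample and $\omega$ $K$-invariant, and we take $T\subset Z(K)_0$ containing the extremal vector field $\mathcal V$ of $\omega$ (which lies in the centre $\mathfrak z(\mathfrak k)$ of $\mathfrak k=\mathrm{Lie}(K)$); this is automatic when $T$ is maximal, the case of interest. Write $V_k=H^0(X,L^k)$, with $\rho(T^c)$-weight decomposition $V_k=\bigoplus_\chi V_k(\chi)$, put $G=\prod_\chi\mathrm{SU}\big(\dim_\C V_k(\chi)\big)$, and let $\mathcal{Z}^T(V_k)$ be the corresponding quotient with its momentum map $\mu_G$. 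By Lemma~\ref{characterization} applied to $L^k$, a $T$-invariant metric on $L^k$ balanced relative to $T$ of some index is the same thing as a zero of $\mu_G$ on $\mathcal{Z}^T(V_k)$; and by the Kempf--Ness correspondence for the reductive group $G^c(V_k)=\prod_\chi\mathrm{SL}(V_k(\chi))$ acting on the Chow point of $(X,L^k)$ --- the $T$-equivariant analogue of Theorem~\ref{luo-zhang} --- such a zero exists exactly when that Chow point has a closed $G^c(V_k)$-orbit, i.e. exactly when $(X,L^k)$ is weakly Chow polystable relative to $T$. So the theorem reduces to: for all $k\gg1$, $L^k$ carries a $T$-invariant metric balanced relative to $T$, with $\tfrac1k\omega_k\to\omega$ in $\cC^\infty$.

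First I would build approximate solutions by the Bergman-kernel method. Fix a $T$-invariant hermitian metric $h$ on $L$ with curvature $\omega$ and, for a $K$-invariant potential $\varphi$, consider $h\,e^{-k\varphi}$ on $L^k$ together with the partial Bergman functions $\rho_k^\chi$ of its weight spaces $V_k(\chi)$; by the Catlin--Zelditch expansion (with coefficients computed by Z.~Lu, see \cite{catlin,zeldich,Lu}) one has $\sum_\chi\rho_k^\chi=k^n+a_1(\omega_\varphi)\,k^{n-1}+O(k^{n-2})$ in $\cC^\infty$, with $a_1$ a universal multiple of the scalar curvature. By Lemma~\ref{characterization}, $h\,e^{-k\varphi}$ is balanced relative to $T$ of index $b=(b_\chi)$ precisely when the weighted sum $\sum_\chi b_\chi\rho_k^\chi$ is constant on $X$; expanding in $1/k$, the leading content of this is that $s(\omega_\varphi)$ equal a fixed affine function of the weights $\log|\chi|$. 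Since $\omega$ is extremal, $s(\omega)-\bar s$ is exactly the Hamiltonian of $\mathcal V$, which --- as $\mathcal V\in\mathfrak t$ --- is such an affine combination of the $\log|\chi|$ (up to lower order, by Tian's comparison of the Hamiltonian with the moment map of the Kodaira embedding), so the leading equation is solved with $\varphi=0$, the remaining mismatch being absorbed into the choice of index $b_k$ --- this is precisely why one works with $G=\prod_\chi\mathrm{SU}(\cdot)$ and not with $G_{T^\perp}$ subject to \eqref{constraint} --- and into a one-parameter subgroup of $\rho(Z_{\Aut_0(X,L)}(T))$, available since $T\subset Z(K)_0$ forces $K\subset Z_{\Aut_0(X,L)}(T)$. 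Solving the ensuing hierarchy of linear equations order by order with $K$-invariant, $T$-compatible potentials produces, for each $m$, a $T$-invariant metric $h_k'$ on $L^k$ with suitably rescaled $\|\mu_G(\mathbf{s}_k')\|=O(k^{-m})$, while $\tfrac1k\omega_k'\to\omega$ in $\cC^\infty$.

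Next I would correct to an exact solution by a quantitative implicit function theorem in the spirit of \cite{Do-one}: after the standard rescaling removing the $k^n$ factors, the differential of $\mathbf{s}\mapsto\mu_G(\mathbf{s})$ at the approximate solution has its smallest non-zero eigenvalue bounded below uniformly in $k$ --- on $\mathcal{Z}^T(V_k)$ its kernel is exactly the tangent to the $\rho(Z_{\Aut_0(X,L)}(T))$-orbit, which is why that group is quotiented out --- so, the approximate error beating the inverse operator norm, Newton iteration converges to a genuine zero $\mathbf{s}_k$ of $\mu_G$, i.e. a $T$-invariant metric $h_k$ on $L^k$ balanced relative to $T$, for all $k\gg1$; since $\mathbf{s}_k$ stays $\cC^\infty$-close to $\mathbf{s}_k'$, one gets $\tfrac1k\omega_k\to\omega$. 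Combined with the Kempf--Ness step above, this yields asymptotic weak Chow polystability relative to $T$.

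I expect the uniform spectral gap in the last step to be the main obstacle: one must identify the kernel of the linearized momentum map via a Bochner/Matsushima-type decomposition of the $T$-invariant holomorphic vector fields on $X$ (using that $T$ is central in $K$) and rule out any additional slowly-decaying spectrum, uniformly in $k$ and along the approximating family. It is precisely here that the larger group $G_{T^\perp}^c$ with the constrained index \eqref{constraint} causes difficulty, so that only the weak form of relative polystability comes out; this gap between Theorem~\ref{mabuchi} and Conjecture~\ref{relative-stability} is what forces the extra hypotheses (i)--(ii) in Theorem~\ref{main}.
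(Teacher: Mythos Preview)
Your sketch is essentially the same strategy the paper records for this theorem, which it does not prove but attributes to Mabuchi \cite{M1,M2,M3}: first approximate the extremal metric by \emph{almost critical} metrics via the Bergman-kernel expansion (this is \cite[Theorem~B]{M1}), then perturb these to genuine $T$-relative balanced metrics by a Donaldson-style quantitative implicit function argument combined with the Phong--Sturm estimates \cite{PS} (this is the content of \cite{M2,M3}). Your identification of the uniform spectral gap for the linearized momentum map as the crux, and your remark that working with $G=\prod_\chi\mathrm{SU}(\cdot)$ rather than $G_{T^\perp}$ is what makes this go through (yielding only weak relative polystability), match exactly the paper's commentary following the theorem.
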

The above statement is implicitly established in \cite{M3} in the course of proof that the existence of an extremal K\"ahler metric in $2\pi c_1(L)$ implies  that $(X,L)$ is asymptotically weakly Chow polystable relative to $T$; the choice of $T$ is specified by \cite[Theorem~I]{M3}. More precisely,  \cite[Theorem~B]{M1} shows that any $T$-invariant extremal K\"ahler metric in $2\pi c_1(L)$ can be approximated by a sequence of {\it almost critical} metrics; then, combining S.~K.~Donaldson's idea in \cite{Do-one} and D.~Phong--J.~Sturm's estimates in \cite{PS}, a perturbation technique is elaborated in \cite{M2} and applied in \cite{M3} in order to perturb the almost critical metrics to balanced metrics relative to $T$,  in a way that their curvatures converge to $\omega$.

The limitation of Theorem~\ref{mabuchi} to complete the proof of the splitting properly (Theorem~\ref{main} in the introduction) in full generality is in the lack of analogue of Lemma~\ref{gabor}, which guarantees that any two balanced metrics relative to $T$ on $L$ are homothetic. We show that this is true under the hypothesis (i) and (ii) of Theorem~\ref{main}.

\section{The Kempf--Ness function ${\mathbb D}$}\label{s:functional}
In this section we are going to apply the well-known `Kempf--Ness' principle related to the problems of studying zeroes of momentum maps. For simplicity, we discuss the existence of hermitian metrics on $L$ which are balanced relative to a fixed torus $T\subset {\rm Aut}_0(X,L)$ of some index, but the discussion and all of the results can be easily adapted to the case of indices satisfying \eqref{constraint} simply by changing the group $G$ to $G_{T^{\perp}}$. We have seen in Section~\ref{s:relative balanced} that the problem of finding a basis ${\bf s}\in {\mathcal B}^T(V)$ for which $h_{\bf s}$ is balanced with respect to $T$ is equivalent to finding zeroes of the momentum map $\mu_G$ in a given orbit $G^c \cdot [{\bf s}_0 ]\subset {\mathcal Z}^T(V)$. As $\mu_G$ is $G$-equivariant, this becomes a problem on the symmetric space $G^c/G$. On that space we are going to consider a function $F_{{\bf s}_0} : G^c/G \to {\mathbb R}$, called {\it Kempf--Ness} function, whose behaviour determines whether or not there exists a zero of $\mu_G$ on $G^c \cdot {\bf s}_0$. This function is geodesically convex and its  derivative  is essentially $\mu_G$; hence  $\mu_G$  admits a zero on $G^c \cdot {\bf s}_0$ if and only if $F_{{\bf s}_0}$ attains a minimum on $G^c/G$.

On the space ${\mathcal H}$ of all hermitian inner products $m$ on $V$ such that  $V(\chi_k) \perp^{m} V(\chi_l), \ l \neq k$ (equivalently, which admit admissible normal bases of some index)  the group $G^c(V)$ acts with stabilizer $G(V,m)=G^c(V)  \cap {\rm U}(V,m)$; thus, for each $m_0 \in {\mathcal H}$, by introducing an admissible orthonormal basis ${\bf s}_0$,  we can identify the corresponding orbit ${\mathcal M}_{m_0}= G^c(V) \cdot m_0$ with the symmetric space $G^c/G$ (which is known to be reducible of non-positive sectional curvature). The underlying riemannian metric is explicitly given by (see e.g.~\cite{helgason})
\begin{equation}\label{metric}
(M_1, M_2)_m = {\rm Tr}(M_1 \cdot m^{-1} \cdot M_2 \cdot m^{-1}),
\end{equation}
where the hermitian inner product $m$ is identified with a positive-definite hermitian endomorphism of $V$  via $m_0$, and $M_1,M_2 \in T_m({\mathcal M}_{m_0})$ with hermitian skew-symmetric endomorphisms of $(V, m_0)$.  Another well-known fact (see e.g. \cite{helgason})  is that  geodesics correspond to $1$-parameter subgroups of $G^c(V)$, so the geodesic $m(t)$ joining two points $m_1, m_2 \in {\mathcal M}_{m_0}$  is generated by the family of admissible normal bases ${\bf s}(t)= \{e^{t\gamma_0}s_0, \cdots, e^{t\gamma_N}s_N\}$,  where ${\bf s}=\{s_0, \cdots, s_N\}$ is an admissible orthonormal basis for $m_1$ which diagonalizes $m_2$, and $m_2(s_i,s_i)= e^{-2\gamma_i}$ (with $\sum_{i=1}^{n_k} \gamma_{i,k}=0$) and $m(t)$ is the unique hermitian inner product for which $s(t)$ is an admissible orthonormal  basis.

Denote by $\mathcal{K}_\omega$ be the set of all K\"ahler metrics  in the K\"ahler class $[\omega]$, i.e.
$$
\mathcal{K}_\omega = \{ \omega_\varphi \ |  \ \omega_\varphi = \omega + dd^c \varphi > 0, \varphi \in C^\infty (X) \}
$$
We can define a map $\mathcal{FS} : \mathcal{H} \mapsto \mathcal{K}_\omega$ as follows:  For any $m \in {\mathcal H}$  let ${\bf s}=\{s_0, \cdots, s_N\}$ be an admissible orthonormal basis of $V$  and $\omega_{FS,{\bf s}}$ the Fubini--Study it defines on $P(V^*)$. Consider the pull-back $\omega_{X,{\bf s}}= \kappa^* (\omega_{\rm FS, {\bf s}})$ under the Kodaira embedding (satisfying \eqref{potential}),  which is the curvature of the hermitian metric $h_{\bf s}$ on $L$, given by \eqref{hs}. Put
\begin{eqnarray}
\label{w}
\mathcal{FS}(m) : = \omega_{X, {\bf s}},  \  \  h_{m}:= h_{\bf s}, 
\end{eqnarray}
noting that for a fixed $m$ the right hand sides of \eqref{hs} and \eqref{potential} are  independent of the choice of orthonormal basis ${\bf s}$.

Many authors have considered (see e.g. \cite{do,gauduchon-book}) the functional  ${\mathbb I} : \mathcal{K}_{\omega} \to \R$, defined up to an additive constant  by requiring that its derivative $\delta {\mathbb I}$ is given by
$$
(\delta {\mathbb I}) (\dot{\varphi}) = \int_X \dot{\varphi} \ \omega_\varphi^n,
$$
where $\dot{\varphi} \in T_{\omega_{\varphi}} (K_{\omega}) = C^{\infty}(X)$.  Following H.~Luo~\cite{L} and S.~K.~Donaldson~\cite{D1},  we introduce ${\mathbb D} : \mathcal{H} \to {\mathbb R}$ by 
\begin{equation} \label{d:d}
{\mathbb D} (m) : = - {\mathbb I} (\mathcal{FS} (m)).
\end{equation}
The restriction of ${\mathbb D}$ to the $G^c(V)$ orbit ${\mathcal M}_{m_0}$  defines a function on $G^c(V)/G(V,m_0)$ which, by introducing an admissible orthonormal basis ${\bf s}_0$ of $m_0$,  will be the Kempf--Ness function $F_{{\bf s}_0} : G^c/G \to {\mathbb R}$ referred to earlier.

\smallskip
The following results in this section are essentially proved in  \cite{D1} and \cite{L}. The way we treat the reduced automorphism group is inspired by X.~X.~Chen's work \cite{C1}. 

We start by  characterizing  the critical points of ${\mathbb D}$.

\begin{prop} \label{cha}
A hermitian inner product $m$ is a critical point of ${\mathbb D} : G^c(V)\cdot m_0 \mapsto \R$  if and only if the induced hermitian metric $h_m$ defined by \eqref{hs} and \eqref{w} is balanced {\rm (}of some index $b${\rm )} with respect to $T$, i.e. if and only if  any admissible orthonormal basis ${\bf s}$ of $m$ is a zero of the momentum map $\mu_G$. Likewise,  $m$ is a critical point of ${\mathbb D} : G^c_{T^{\perp}}(V)\cdot m_0 \mapsto \R$ if and only if $h_m$ is balanced of index satisfying \eqref{constraint}.
\end{prop}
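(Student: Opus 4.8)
The plan is to compute the first variation of $\mathbb{D}$ along the $G^c(V)$-orbit ${\mathcal M}_{m_0}$ and match it with the momentum map $\mu_G$. The key point is that $\mathbb{D} = -\mathbb{I}\circ\mathcal{FS}$, so by the chain rule we need both (a) the variation of $\mathbb{I}$, which by definition is $(\delta\mathbb{I})(\dot\varphi)=\int_X\dot\varphi\,\omega_\varphi^n$, and (b) the variation of the K\"ahler potential of $\omega_{X,\mathbf{s}}=\mathcal{FS}(m)$ as $m$ moves in its orbit. For the latter, I would parametrize a tangent vector to ${\mathcal M}_{m_0}$ at $m$ by a hermitian endomorphism $M$ (skew-symmetric with respect to $m$ after the identification in \eqref{metric}), realize the corresponding path as $m(t)$ generated by a one-parameter family of admissible normal bases $\{e^{tM}s_i\}$ for an $m$-admissible orthonormal basis $\{s_i\}$ diagonalizing $M$ with eigenvalues $\gamma_i$, and then differentiate \eqref{potential}: since $\omega_{X,\mathbf{s}(t)}=\omega+\tfrac12 dd^c\log\big(\sum_i e^{-2t\gamma_i}h(s_i,s_i)\big)$ up to the scaling by $h_{\mathbf{s}(t)}$, the potential variation at $t=0$ is, up to a constant factor, $\dot\varphi = -\big(\sum_i \gamma_i h_{\mathbf{s}}(s_i,s_i)\big)$ (after dividing by $\rho=\sum_i h_{\mathbf{s}}(s_i,s_i)$ built into \eqref{hs}).

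Plugging this into $\delta\mathbb{I}$ gives $(\delta\mathbb{D})(M) = \int_X \big(\sum_i \gamma_i\, h_{\mathbf{s}}(s_i,s_i)\big)\,\omega_{X,\mathbf{s}}^n = \sum_i \gamma_i\,\langle s_i,s_i\rangle_{h_{\mathbf{s}}}$, i.e. the pairing of the diagonal of $M$ (in the basis $\mathbf{s}$) with the matrix $\big(\langle s_i,s_j\rangle_{h_{\mathbf{s}}}\big)$. Since the tangent directions $M$ to $G^c(V)/G(V,m_0)$ range (after the standard identification of the symmetric space with hermitian endomorphisms) over hermitian endomorphisms respecting the block decomposition \eqref{e:split} and traceless on each block $V(\chi_k)$ — this is exactly $\mathfrak{g}^c$ where $G^c(V)=\prod_k \mathrm{SL}(V(\chi_k))$ — the vanishing of $\delta\mathbb{D}$ is equivalent to $\big(\langle s_i,s_j\rangle_{h_{\mathbf{s}}}\big)$ being, on each block $V(\chi_k)$, a scalar multiple $b_k\,\mathrm{Id}$ of the identity. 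By Lemma~\ref{characterization} (and Definition~\ref{d:balanced}) this says precisely that $h_m = h_{\mathbf s}$ is balanced relative to $T$ of index $b=(b_1,\ldots,b_\nu)$, equivalently that $\mu_G(\mathbf{s})=0$; the constants $b_k$ are automatically positive and satisfy $\sum_k n_k b_k = N+1$ because $\sum_i\langle s_i,s_i\rangle_{h_{\mathbf s}} = \int_X \rho\cdot\rho^{-1}\,\omega_{X,\mathbf{s}}^n \cdot(\text{normalization}) = N+1$ by the defining property of the Bergman-type kernel of a Fubini--Study metric. For the second assertion, I replace $G^c(V)$ by $G^c_{T^\perp}(V)$: the only change is that the admissible tangent directions $M$ now satisfy the single linear constraint $\sum_k (1+\log|\chi_k(t)|)\,\mathrm{tr}(M|_{V(\chi_k)})=0$ coming from the definition of $G_{T^\perp}^c$, so $\delta\mathbb{D}$ vanishes along this orbit iff the block scalars $b_k$ of $\big(\langle s_i,s_j\rangle_{h_{\mathbf s}}\big)$ are proportional to $1+\log|\chi_k(t)|$ modulo the overall normalization $\sum_k n_k b_k = N+1$ — which is exactly the relation \eqref{constraint}.

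The main obstacle, and the only place requiring genuine care, is step (b): correctly tracking the potential variation through the two normalizations built into $h_{\mathbf{s}}$ in \eqref{hs} (the division by $\sum_i h(s_i,s_i)$) and into the passage from $\omega$ to $\omega_{X,\mathbf{s}}$ in \eqref{potential}, and confirming that the resulting $\dot\varphi$ is indeed $\pm\big(\sum_i\gamma_i h_{\mathbf{s}}(s_i,s_i)\big)$ up to an additive constant (which is irrelevant since $\mathbb{I}$ is defined only up to a constant and its variation kills additive constants by $\int_X \omega_\varphi^n = \mathrm{const}$, though one should note this constant enters trivially given $\sum\gamma_i$ need not vanish — here I must be careful to subtract the correct trace-normalized part, which is precisely what restricting to $\mathfrak{g}^c$ versus $\mathfrak{g}_{T^\perp}^c$ encodes). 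I would also invoke the geodesic description recalled before the proposition to justify that it suffices to compute the derivative at $t=0$ along one-parameter subgroups. Once the variational formula $\delta\mathbb{D}(M)=-\sum_i\gamma_i\langle s_i,s_i\rangle_{h_{\mathbf{s}}}$ (up to sign and constant) is in hand, the equivalences are immediate from linear algebra and the definitions in Section~\ref{s:relative balanced}.
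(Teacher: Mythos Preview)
Your proposal is correct and takes essentially the same approach as the paper: both compute the first variation of $\mathbb{D}$ along geodesics (one-parameter subgroups) to obtain $\mathbb{D}'(0)=c\sum_i\gamma_i\langle s_i,s_i\rangle_{h_m}$, and then conclude that criticality is equivalent to the matrix $\big(\langle s_i,s_j\rangle_{h_m}\big)$ being a scalar on each block $V(\chi_k)$, i.e.\ to $h_m$ being balanced relative to $T$. The only cosmetic difference is that the paper first fixes a basis to get the diagonal condition and then invokes ``elementary linear algebra'' (varying the admissible orthonormal basis) for the off-diagonal vanishing, whereas you package the same step as the trace-pairing orthogonal complement of $\mathfrak{g}^c$; for the $G^c_{T^\perp}$ case the paper, like you, simply says the argument carries over with obvious modifications.
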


\begin{proof}
For ${\mathcal M}_{m_0} = G^c(V) \cdot m_0$, we will prove that $m$ is a critical point of ${\mathbb D}: {\mathcal M}_{m_0} \mapsto \R$ if and only if there exist real numbers $(b_1, \ldots, b_{\nu})$ such that for some (and hence any) admissible orthonormal basis  ${\bf s}= \{ s_{k,i}, 1 \leq k \leq \nu, 1 \leq i \leq n_k \}$ of $m$
\begin{equation}\label{conditions}
\begin{split}
\int_X h_{m} (s_{k,i}, s_{k,i}) \ {\mathcal FS}(m)^n & =   b_k, \  i=1, \ldots, n_k, \ k=1, \ldots, \nu \\
\int_X h_{m} (s_{k,i}, s_{l,j}) \ {\mathcal FS}(m)^n & = 0,  \  {\rm if} \ k\neq l \ {\rm or} \  i \neq j.
\end{split}
\end{equation}
Using the hermitian metric $h_m$ in the definition \eqref{hs}, we see that the conditions \eqref{conditions} are equivalent to $h_m$ being balanced relative to $T$ of index $b=(b_1, \ldots, b_{\nu})$. The case of a $G^c_{T�{\perp}}(V)$ orbit will follow with obvious modifications of the arguments below.

\smallskip
($\Rightarrow$) Let ${\bf s}= \{s_{k,i}, 1 \leq k \leq \nu, 1 \leq i \leq n_k\}$ be an admissible orthonormal basis of $m$. For any choice of $\gamma_{l,j}$  with 
$$
\sum_{i=1}^{n_k} \gamma_{k,i} = 0.
$$
the basis ${\bf s}_t=\{ e^{\gamma_{l,j}t} s_{l,j} \}$ defines a hermitian inner product $m(t)$ on $V$ (such that ${\bf s}_t$ is an  admissible orthonormal bases for $m(t)$) and, as we have noticed,  $m(t)$ is a geodesic. Put ${\mathbb D}(t)={\mathbb D}(m(t))$. Using \eqref{potential}, \eqref{w}  and \eqref{d:d},  we obtain for the derivative ${\mathbb D}'(t)$
\begin{equation}\label{D'}
\begin{split}
{\mathbb D}'(t) &= \int_X \frac{\sum_{j=0}^N 2\gamma_j e^{2t\gamma_j}|s_j|_h^2}{\sum_{j=0}^N e^{2t\gamma_j}|s_j|_h^2} \Big({\mathcal FS}(m(t))\Big)^n  \\
&= \int_X Q(t) \Big( {\mathcal FS}(m(t))\Big)^n,
\end{split}
\end{equation}
with 
\begin{equation}\label{Q}
Q(t) = \frac{\sum_{j=0}^N 2\gamma_j e^{2t\gamma_j}|s_j|_h^2}{\sum_{j=0}^N e^{2t\gamma_j}|s_j|_h^2}.
\end{equation} 
Then,  the fact that $m$ is a critical point of ${\mathbb D}$ implies 
\begin{equation}\label{calcul}
0 ={\mathbb D}'(0) = 2 \sum_{k=1}^{\nu}\sum_{i=1}^{n_k} \gamma_{k,i}  \int_X h_{m} (s_{k,i}, s_{k,i}) \ {\mathcal FS}(m)^n,
\end{equation}
where we have used the fact that \eqref{Q} is independent of the choice of a hermitian metric $h$ on $L$. For the latter equality to hold for any choice of real numbers $\gamma_{k,i}$ as above, $\int_X h_{m} (s_{k,i}, s_{k,i}) \ {\mathcal FS}(m)^n$ must be 
independent of $i$; since ${\mathcal FS}(m)$  and $h_{m}$  are $T$ invariant, $\int_X h_{m}(s_{k,i}, s_{l,j}) \ {\mathcal FS}(m)^n = 0$ for $k \neq l$. Elementary linear algebra shows that if $\int_X h_{m} (s_{k,i}, s_{k,i}) \ {\mathcal FS}(m)^n$ is independent of $i$ for {\it any} choice of an admissible orthonormal basis ${\bf s}$ for $m$,  then one must have $\int_X h_{m} (s_{k,i}, s_{k,j}) \ {\mathcal FS}(m)^n = 0$ for $ i \neq j$.

($\Leftarrow$) The conditions \eqref{conditions}  mean that some admissible orthonormal basis ${\bf s}$ of $m$ is an admissible normal basis (of index ${b}=(b_1, \ldots, b_{\nu})$) for the induced $L_2$ hermitian inner product $\langle \cdot , \cdot \rangle_{h_{m}}$; this is clearly independent of the choice of a particular admissible orthonormal basis of $m$. It is therefore enough to pick one admissible orthonormal basis  ${\bf s}$, and show that if \eqref{conditions} is satisfied, then $m$ must be a critical point of ${\mathbb D}$, or equivalently, ${\mathbb D}'(0)=0$ along any geodesic $m(t)$ issued at $m$. The computation \eqref{calcul} shows this. \end{proof}

\begin{rem}\label{chow-norm} If we consider ${\mathbb D}$ as a function on $G^c(V)/G(V,m_0)$ (or $G^c_{T^{\perp}}(V)/G_{T^{\perp}}(V, m_0)$), the computation \eqref{D'} (compared to a similar result  in \cite{Z}) shows that ${\mathbb D}$ coincides, up to a positive scale and an additive constant, with the function $\log || \cdot ||_{{\rm CH},m_0}$ defined on $G^c(V)/G(V,m_0)$  (resp. $G^c_{T^{\perp}}(V)/G_{T^{\perp}}(V, m_0)$), where $|| \cdot ||_{{\rm CH},m_0}$ is the $U(V,m_0)$-invariant  {\it Chow norm} on the space $W= {\rm Sym}^d(V^*)^{\otimes (n+1)},$ introduced in \cite{Z}. This, together with Proposition~\ref{cha},  explains Theorems~\ref{luo-zhang} and \ref{chow-stability}, once one proves (as in \cite{Z} and \cite{M1}) that $\log || \cdot ||_{{\rm CH},m_0}$ has a critical point on the $G^c(V)$ (resp. $G^c_{T^{\perp}}(V)$) orbit of $\hat X$ if and only the orbit is closed (i.e. $(X,L)$ is (weakly) relative Chow stable). Note that the latter condition is independent of the choice of $m_0$, showing that the existence of critical points of ${\mathbb D}$ is independent of the choice of a $G^c(V)$ orbit in ${\mathcal H}$.
\end{rem}

The next results hold for ${\mathcal M}_{m_0}$ being either the  $G^c(V)$ or the $G^c_{T^{\perp}}(V)$ orbit of $m_0$ in ${\mathcal H}$.

\begin{prop}\label{convex} 
${\mathbb D}$ is convex along geodesics in $\mathcal{M}_{m_0}$. Furthermore, for any two critical points $m_1, m_2$ {\rm (}if they exist{\rm )}, the geodesic $m(t)$ joining $m_1$ and $m_2$  defines a family of balanced hermitian metrics $h_{m(t)}$ relative to $T$ on $L$, which are isometric under the action of ${\rm Aut}_0(X,L)$ and, therefore, have the same index $b$.
\end{prop}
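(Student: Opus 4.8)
The plan is to exploit the variational structure of ${\mathbb D}$ established in Proposition~\ref{cha}, together with the geometry of the symmetric space ${\mathcal M}_{m_0}$. First I would prove convexity: given a geodesic $m(t) = \{e^{t\gamma_j} s_j\}$ generated by an admissible normal basis, one computes ${\mathbb D}''(t)$ from the first-derivative formula \eqref{D'}. Differentiating $Q(t)$ in \eqref{Q} one gets ${\mathbb D}''(t) = \int_X \big(\dot Q(t) + n\, Q(t)\, \Delta_{t} Q(t)\big)\big({\mathcal FS}(m(t))\big)^n$ where the second term comes from differentiating ${\mathcal FS}(m(t))^n$ using \eqref{potential} (note $dd^c Q(t)$ is, up to the factor coming from $\tfrac12$, the variation of $\omega_{X,{\bf s}(t)}$). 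A Cauchy--Schwarz / Jensen argument on the expression $Q(t) = \sum_j 2\gamma_j w_j / \sum_j w_j$ with weights $w_j = e^{2t\gamma_j}|s_j|_h^2$ shows $\dot Q(t) = 2\big(\sum \gamma_j^2 w_j/\sum w_j - (\sum\gamma_j w_j/\sum w_j)^2\big)\ge 0$, while the $\int_X Q\,\Delta Q\,(\cdot)^n = -\int_X |\nabla Q|^2_{{\mathcal FS}(m(t))}(\cdot)^n \le 0$ term must be controlled; the standard way (going back to \cite{L,D1}) is to integrate by parts so that ${\mathbb D}''(t)$ is manifestly a sum of an $L^2$-variance of the $\gamma_j$ against the Bergman-type weights and a nonnegative Dirichlet-type contribution, yielding ${\mathbb D}''(t)\ge 0$. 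I would present this as the routine but essential computation, following \cite{D1,L}.

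The second half is the uniqueness-up-to-isometry statement. Suppose $m_1, m_2$ are both critical points on ${\mathcal M}_{m_0}$ and let $m(t)$, $t\in[0,1]$, be the connecting geodesic. By convexity ${\mathbb D}$ is convex on $[0,1]$; since $t=0$ and $t=1$ are both critical points of ${\mathbb D}|_{{\mathcal M}_{m_0}}$, in particular ${\mathbb D}'(0)=0={\mathbb D}'(1)$, and a convex function on $[0,1]$ with vanishing derivative at both endpoints is constant. Hence ${\mathbb D}''(t)\equiv 0$, which forces the variance term above to vanish: the function $Q(t)$ — equivalently the normalizing function $\sum_j e^{2t\gamma_j}|s_j|^2_h$ — has $|\nabla Q(t)|_{{\mathcal FS}(m(t))}\equiv 0$, so $Q(t)$ is constant on $X$ and, moreover, the variance term forces the $\gamma_j$ to be "aligned with the $T$-weight decomposition" in the precise sense that $\gamma_j$ is constant on each block $V(\chi_k)$ (this uses that $\sum_{i=1}^{n_k}\gamma_{k,i}=0$, so $\gamma_{k,i}=0$ within a block unless the block splits further, in which case one iterates). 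From $Q(t)\equiv\text{const}$ on $X$ and \eqref{potential} one gets $\omega_{X,{\bf s}(t)} = \omega_{X,{\bf s}(0)}$ for all $t$, i.e.\ ${\mathcal FS}$ is constant along the geodesic. The one-parameter subgroup $t\mapsto \mathrm{diag}(e^{t\gamma_j})$ of $G^c(V)$ then acts on $P(V^*)$ preserving the Fubini--Study form $\omega_{FS,{\bf s}(0)}$ and preserving $\kappa(X)$ (each $m(t)$ lies on the same $G^c$-orbit through points that are all critical, hence all define embeddings of $X$), so it induces a holomorphic isometry of $(X,\omega_{X,{\bf s}(0)})$; by the identification of ${\rm SL}_0(V,X)$ with $\widetilde{\Aut}_0(X)={\rm Aut}_0(X,L)$ from Section~\ref{s:relative balanced}, the endpoint $m_1$ is carried to $m_2$ by an element of ${\rm Aut}_0(X,L)$. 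Finally, isometric relative balanced metrics must have equal index $b$ because $b$ is read off from the diagonal entries $\int_X h_m(s_{k,i},s_{k,i})\,{\mathcal FS}(m)^n$ in \eqref{conditions}, which are preserved by pullback under an isometry.

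The main obstacle I expect is the bookkeeping in showing that ${\mathbb D}''(t)=0$ forces the geodesic's generator to integrate to an \emph{automorphism of $X$} rather than merely an automorphism of the ambient $P(V^*)$ preserving $\omega_{FS}$. The subtle point is that the geodesic must stay within the locus where $m(t)$ arises from an admissible basis and the image $\Phi_{{\bf s}(t)}(X)$ is a genuine critical configuration; one needs the rigidity "$\|\nabla Q\|\equiv 0 \Rightarrow$ the $1$-parameter group preserves the embedded subvariety, not just the hyperplane bundle," which is where X.~X.~Chen's treatment \cite{C1} of the reduced automorphism group is the right tool — the holomorphic vector field generating the group has nonempty zero set (it fixes the image of $X$ pointwise modulo the torus, or at least preserves it), hence lies in $\mathfrak{h}_0$, the Lie algebra of $\widetilde{\Aut}_0(X)$. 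Handling the degenerate case where some $|s_j|_h$ vanishes on part of $X$ (so the weights $w_j$ are not all positive there) requires a small continuity/limiting argument, but poses no real difficulty. The $G^c_{T^\perp}(V)$ case is identical, replacing $G$ by $G_{T^\perp}$ throughout and noting the computation \eqref{D'}–\eqref{calcul} is insensitive to which of the two groups one restricts to.
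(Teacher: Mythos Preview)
Your convexity computation is left unresolved: you correctly isolate a variance contribution $\dot Q\ge 0$ and a Dirichlet-type term, but you write the latter as $\int_X Q\,\Delta Q\,\omega_t^n=-\int_X|\nabla Q|^2\,\omega_t^n\le 0$ and then say it ``must be controlled'' without doing so. The paper avoids this sign-chasing entirely by introducing the \emph{holomorphic} map
\[
\eta:\ (z,x)\longmapsto [e^{z\gamma_0}s_0(x):\cdots:e^{z\gamma_N}s_N(x)],\qquad z=t+i\theta,
\]
from the strip $\{0<\mathrm{Re}\,z<1\}\times X$ to $\mathbb{C}P^N$. Since $\omega_{\rm FS}>0$ and $\eta$ is holomorphic, $(\eta^*\omega_{\rm FS})^{n+1}\ge 0$ as a top form, and one computes that its integral equals a positive multiple of $\int_0^1{\mathbb D}''(t)\,dt$. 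Convexity follows.

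The more serious gap is in your second part. You assert that the one-parameter group $\mathrm{diag}(e^{t\gamma_j})$ ``acts on $P(V^*)$ preserving the Fubini--Study form $\omega_{{\rm FS},{\bf s}(0)}$ and preserving $\kappa(X)$''. The first claim is false: a diagonal subgroup with real exponents lies in $\mathrm{SL}(N{+}1,\mathbb R)$, not in $\mathrm{SU}(N{+}1)$, so it does \emph{not} preserve $\omega_{\rm FS}$. The second claim --- that it preserves $\kappa(X)$ --- is precisely what must be proved, and nothing in your argument establishes it. Knowing that the \emph{intrinsic} pullback $\omega_{X,{\bf s}(t)}$ is constant in $t$ says nothing about whether the \emph{image} $\Phi_{{\bf s}(t)}(X)\subset\mathbb{C}P^N$ moves; your parenthetical ``each $m(t)$\ldots\ define[s] embeddings of $X$'' is trivially true but irrelevant, and the appeal to \cite{C1} does not supply the missing step.

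The Mabuchi map $\eta$ is exactly what closes this gap. From ${\mathbb D}''\equiv 0$ one gets $\int\eta^*\omega_{\rm FS}^{n+1}=0$, hence $(\eta^*\omega_{\rm FS})^{n+1}\equiv 0$, hence the holomorphic map $\eta$ has rank $\le n$ everywhere and its image has complex dimension $\le n$. Since the image contains the $n$-dimensional subvariety $\Phi_{{\bf s}(0)}(X)$, one concludes $\Phi_{{\bf s}(t)}(X)=\Phi_{{\bf s}(0)}(X)$ for all $t$; that is, $\mathrm{diag}(e^{t\gamma_j})$ preserves $\kappa(X)$ and therefore lies in $\mathrm{SL}_0(V,X)\cong\mathrm{Aut}_0(X,L)$. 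This dimension-count argument is the key idea your proposal is missing.
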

\begin{proof} By \eqref{D'}, the second derivative of ${\mathbb D}(t)$ is
\begin{eqnarray*}
{\mathbb D}''(t) &=& \int_X \Big(\frac{\partial Q}{\partial t} + n |dQ|^2_{{\mathcal FS}(m(t))} \Big)  \mathcal{FS}(m(t))^n.
\end{eqnarray*}
To show the convexity of ${\mathbb D}(t)$ along geodesics, we adopt an argument of T.~Mabuchi~\cite{M1} by constructing the map
\begin{eqnarray*}
\eta : [0,1] \times [0, 2\pi) \times X &\rightarrow& \mathbb{C}P^N,\\
(t, \theta, x) &\mapsto& [e^{(t+\sqrt{-1}\theta)\gamma_0} s_0(x), \ldots, e^{(t+\sqrt{-1}\theta)\gamma_N} s_N(x)].
\end{eqnarray*}
Letting  $z=t+\sqrt{-1}\theta$, we have
\begin{eqnarray*}
0 & \leq & \int_{\eta([0,1] \times [0, 2\pi) \times X)} \omega_{{\rm FS},{\bf s}}^{n+1}\\
&=&\int_{[0,1] \times [0, 2\pi) \times X} (\eta^*\omega_{{\rm FS}, {\bf s}})^{n+1}\\
&=&\int_{[0,1] \times [0, 2\pi) \times X} ( \frac{\sqrt{-1}}{2\pi} \partial_X \bar{\partial}_X \log (\sum_{j=0}^N e^{2t\gamma_j} |s_j|^2 ) + \frac{\sqrt{-1}}{4\pi} \partial_X Q \wedge d \bar{z}\\
& & + \frac{\sqrt{-1}}{4\pi} d z \wedge \bar{\partial}_X Q + \frac{\sqrt{-1}}{8\pi} \frac{\partial Q}{\partial t} dz \wedge d\bar{z})^{n+1} \\
&=& 2\pi \int_0^1 \int_X \frac{n+1}{4\pi} \frac{\partial Q}{\partial t} + \frac{(n+1)n}{4\pi} |d Q|^2_{{\mathcal FS}(m(t))} (\Phi^*_t \omega_{{\rm FS}, {\bf s}(t)})^n dt \\
&=& \frac{n+1}{2} \int_0^1 {\mathbb D}^{''}(t) dt
\end{eqnarray*}
Hence ${\mathbb D}(t)$ is convex. It follows that any critical point of ${\mathbb D}$ is a global minimizer. Suppose now $m_1, m_2$ are two minimizers of ${\mathbb D}$. Joining $m_1$ and $m_2$ with a geodesic $m(t)$ as in the proof of \eqref{D'}, we have ${\mathbb D}(0) = {\mathbb D}(1)$ and ${\mathbb D}'(0) = 0 = {\mathbb D}'(1)$,  so that ${\mathbb D}''(t) = 0, 0 \leq t \leq 1$. It follows from the calculation above that
$$
\int_{\eta([0,1] \times [0, 2\pi) \times X)} \omega_{{\rm FS}, {\bf s}}^{n+1} = 0.
$$
We conclude that for any point $p \in \eta([0,1] \times [0, 2\pi) \times X) \subset \mathbb{C}P^N$, the complex dimension of any neighbourhood of $p$ in $\eta([0,1] \times [0, 2\pi) \times X)$ is $n$. Hence the image  $\Phi_{{\bf s}(t)}(X)$ is fixed, showing that  the one-parameter group ${\rm diag}(e^{t\gamma_0}, \ldots, e^{t\gamma_N})$ in ${\rm SL}(N+1, \C)$ induces a one-parameter group in $\widetilde{\Aut}_0(X) = {\Aut}(X,L)$. \end{proof}

For any $m \in {\mathcal M}_{m_0}$, we introduce the group
$$\Aut_m (X, {\mathcal M}_{m_0}, {\mathbb D}) = \{ g \in \widetilde{\Aut}_0(X)  \ | \  \rho(g) ({\mathcal M}_{m_0}) = {\mathcal M}_{m_0},  \ {\mathbb D} (\rho (g) \cdot m) = {\mathbb D}(m) \},$$
where $\rho$ is the representation \eqref{representation}. Clearly, ${\Aut}_m(X, {\mathcal M}_{m_0}, {\mathbb D})$ is a closed subgroup of ${\Aut}(X,L)= \widetilde{\Aut}_0(X)$  while $\rho({\Aut}_m(X, {\mathcal M}_{m_0}, {\mathbb D}))$ is a closed subgroup of ${\rm SL}(V)$.

\begin{lemma} For any two points $m_1, m_2 \in {\mathcal M}_{m_0}$, 
$ \mathrm{Aut}_{{m_1}}(X, {\mathcal M}_{m_0}, {\mathbb D}) = \mathrm{Aut}_{m_2}(X, {\mathcal M}_{m_0}, {\mathbb D}).$
\end{lemma}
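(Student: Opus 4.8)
The plan is to show that for $g \in \widetilde{\Aut}_0(X)$ satisfying $\rho(g)({\mathcal M}_{m_0}) = {\mathcal M}_{m_0}$, the quantity
$$
c(g,m) := {\mathbb D}(\rho(g)\cdot m) - {\mathbb D}(m), \qquad m \in {\mathcal M}_{m_0},
$$
(which makes sense because $\rho(g)\cdot m \in {\mathcal M}_{m_0} \subset {\mathcal H}$) does not depend on $m$. Granting this, write $c(g)$ for the common value; since the first defining condition of $\mathrm{Aut}_m(X, {\mathcal M}_{m_0}, {\mathbb D})$ makes no reference to $m$ and the second one reads $c(g,m)=0$, we obtain $\mathrm{Aut}_m(X, {\mathcal M}_{m_0}, {\mathbb D}) = \{\, g \in \widetilde{\Aut}_0(X) : \rho(g)({\mathcal M}_{m_0})={\mathcal M}_{m_0}, \ c(g)=0 \,\}$ for every $m$, and the lemma follows at once.

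First I would record the equivariance of the Fubini--Study map ${\mathcal FS}$: for $g$ as above and $m \in {\mathcal M}_{m_0}$ one has
$$
{\mathcal FS}(\rho(g)\cdot m) = (g^{-1})^{*}{\mathcal FS}(m).
$$
Indeed, if ${\bf s}=\{s_i\}$ is an admissible orthonormal basis of $m$, then $\{\rho(g)s_i\}$ is an orthonormal basis of $\rho(g)\cdot m$ whose associated projective embedding is $\Phi_{\bf s}\circ g^{-1}$, because $g$ is a biholomorphism intertwining $\kappa$ with the projective transformation induced by $\rho(g)$; the basis $\{\rho(g)s_i\}$ may fail to be compatible with the splitting \eqref{e:split}, but this is irrelevant since the pull-back to $X$ of the Fubini--Study form depends on a basis only through the hermitian inner product it makes orthonormal (and the sign of the exponent of $g$ is immaterial for what follows). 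Using \eqref{d:d} this gives $c(g,m) = {\mathbb I}\big({\mathcal FS}(m)\big) - {\mathbb I}\big((g^{-1})^{*}{\mathcal FS}(m)\big)$.

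It then remains to prove that, for a fixed $h \in \widetilde{\Aut}_0(X)$, the function $\omega_\varphi \mapsto {\mathbb I}(\omega_\varphi) - {\mathbb I}(h^{*}\omega_\varphi)$ is constant on $\mathcal{K}_\omega$ (and then apply this with $h=g^{-1}$; note $h^{*}\mathcal{K}_\omega = \mathcal{K}_\omega$, since $\widetilde{\Aut}_0(X)$ is connected and $h^{*}$ therefore acts trivially on cohomology). Since $\mathcal{K}_\omega$ is convex in the K\"ahler potential, any two of its elements are joined by a smooth path $\omega_t \in \mathcal{K}_\omega$, and $h^{*}\omega_t$ is again such a path. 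Differentiating along it and using the defining relation $(\delta {\mathbb I})(\dot\varphi) = \int_X \dot\varphi\,\omega_\varphi^n$ together with the change-of-variables identity $\int_X (F\circ h)\,h^{*}\mu = \int_X F\,\mu$, valid for the biholomorphism $h$, one checks that $\frac{d}{dt}{\mathbb I}(h^{*}\omega_t) = \frac{d}{dt}{\mathbb I}(\omega_t)$, so that ${\mathbb I}(\omega_t) - {\mathbb I}(h^{*}\omega_t)$ is independent of $t$, as required.

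The one step with genuine content is this last one: it is the statement that the functional ${\mathbb I}$ (a version of the Monge--Amp\`ere, or Aubin--Mabuchi, energy) changes by an additive constant under pull-back by an \emph{automorphism} of $X$ --- the corresponding identity being false for a generic element of $\mathrm{GL}(V)$, which is exactly why $\mathrm{Aut}_m(X, {\mathcal M}_{m_0}, {\mathbb D})$ is defined through $\widetilde{\Aut}_0(X)$ and not through all of $\mathrm{SL}(V)$. The rest is routine. As an alternative to the last two paragraphs, one may invoke Remark~\ref{chow-norm}: identifying ${\mathbb D}$, up to a positive multiple and an additive constant, with the logarithm of the Chow norm of $\hat X$, the independence of $c(g,m)$ from $m$ reduces to the standard fact that $\rho(g)$ fixes the Chow point, i.e. $\rho(g)\cdot\hat X = \lambda(g)\,\hat X$ for a scalar $\lambda(g) \in {\mathbb C}^{*}$ under the natural action of $\mathrm{SL}(V)$ on $W = {\rm Sym}^d(V^{*})^{\otimes (n+1)}$, which immediately yields $c(g) = -\,\alpha\log|\lambda(g)|$ for the relevant positive constant $\alpha$.
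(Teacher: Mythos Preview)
Your proof is correct, and its core coincides with the paper's: both establish that $c(g,m):={\mathbb D}(\rho(g)\cdot m)-{\mathbb D}(m)$ is independent of $m\in{\mathcal M}_{m_0}$ by showing its derivative along a path vanishes via a change of variables under the biholomorphism $g$. The packaging, however, is different. The paper stays inside the finite-dimensional symmetric space ${\mathcal M}_{m_0}$: it joins $m_1$ and $m_2$ by a geodesic $m(t)$, notes that $\rho(g)\cdot m(t)$ is again a geodesic, and then appeals to the explicit formula \eqref{D'} for ${\mathbb D}'$ together with the already-observed fact that the integrand $Q(t)$ in \eqref{Q} is independent of the auxiliary hermitian metric $h$ on $L$, yielding $\frac{d}{dt}{\mathbb D}(m(t))=\frac{d}{dt}{\mathbb D}(\rho(g)\cdot m(t))$. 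You instead push everything through ${\mathcal FS}$ into $\mathcal{K}_\omega$ and invoke the automorphism-cocycle property of the Aubin--Mabuchi functional ${\mathbb I}$, proving it along arbitrary smooth paths. Your route is a bit more conceptual and isolates the relevant property of ${\mathbb I}$ (and your Chow-norm remark gives yet another viewpoint), while the paper's is more self-contained, reusing only the computation \eqref{D'} already needed elsewhere.
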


\begin{proof}
Let $m(t), 0 \leq t \leq 1$ be the geodesic connecting $m_1$ and $m_{2}$ and ${\bf s}$ an admissible orthonormal basis with respect to $m_1$. For any $g \in \mathrm{Aut}_{{m_1}}(X, {\mathcal M}_{m_0}, {\mathbb D})$, $\rho(g) \cdot m(t)$ is the geodesic connecting $\rho(g) \cdot m_1$ and $\rho(g) \cdot m_{2}$. Using the integral formula \eqref{D'}, and noting that \eqref{Q} is independent of the choice of a hermitian metric $h$,   we have $\frac{d}{dt}{\mathbb D}(m(t)) = \frac{d}{dt} {\mathbb D}(\rho(g) \cdot m(t))$, and therefore ${\mathbb D}(m_{2}) = {\mathbb D}(\rho(g) \cdot m_{2})$. Hence $g \in \mathrm{Aut}_{m_{2}}(X, {\mathcal M}_{m_0}, {\mathbb D})$. Similarly, $\mathrm{Aut}_{m_{2}} (X, {\mathcal M}_{m_0}, {\mathbb D})\subset \mathrm{Aut}_{m_1}(X, {\mathcal M}_{m_0}, {\mathbb D})$. \end{proof}
In view of the above lemma, we adopt 

\begin{defn}  ${\Aut}_X({\mathcal M}_{m_0}, {\mathbb D})$ is the closed subgroup of $\widetilde{\Aut}_0(X)$  of elements which preserve ${\mathcal M}_{m_0}$ and ${\mathbb D}$.
\end{defn}

\begin{rem} By definition, $\rho({\rm Aut}_X({\mathcal M}_{m_0}, {\mathbb D})) \subset \rho(Z_{{\Aut}_0(X,L)}(T))\cap G_{T^{\perp}}^c$. If $T$ is a maximal torus in ${\Aut}_0(X,L)$ and $X$ admits an extremal K\"ahler metric, a result by E.~Calabi~\cite{cal} implies  that $Z_{{\Aut}_0(X,L)}(T)= T^c$. We conclude that in this case ${\rm Aut}_X({\mathcal M}_{m_0}, {\mathbb D})$ is trivial.  

Formula \eqref{D'} shows that any element of $\rho(Z_{{\Aut}_0(X,L)}(T))\cap G_T^c$ (resp. $\rho(Z_{{\Aut}_0(X,L)}(T))\cap G_{T^{\perp}}^c$) sends a critical point of ${\mathbb D}$ to another critical point. It then follows from Proposition~\ref{convex} that when ${\mathbb D}$ atteins its minimum on ${\mathcal M}_{m_0}$ (i.e. $(X,L)$ is (weakly) relative Chow stable, see Theorem~\ref{chow-stability} and Remark~\ref{chow-norm}), ${\rm Aut}_X({\mathcal M}_{m_0}, {\mathbb D})$ is the sub-group of $Z_{{\Aut}_0(X,L)}(T)$ of elements whose lifts by $\rho$ belong to $G^c_T$ (resp. $G^c_{T^{\perp}}$). 
\end{rem}
\begin{lemma}\label{l:2} Suppose that ${\mathbb D}$ has a minimum on ${\mathcal M}_{m_0}$. Then, the set of all minimizers represents an orbit for the induced action $\rho(\Aut_X({\mathcal M}_{m_0}, {\mathbb D}))$ and
for any   $m \in \mathcal{M}_{m_0}$ there exists a minimizer $m_{\mathrm{min}}$ of ${\mathbb D}$ such that 
$$
d(m,m_{\mathrm{min}}) = \min_{ g \in \mathrm{Aut}_X({\mathcal M}_{m_0}, {\mathbb D})} d(m, \rho(g) \cdot m_{\mathrm{min}}),
$$
where $d$ is the distance function defined on $\mathcal{M}_{m_0}$ with respect to the metric \eqref{metric}. Furthermore, if $m(t), 0 \leq t \leq 1$ is the geodesic connecting $m$ and $m_{\mathrm{min}}$, then 
$$
d(m(t),m_{\mathrm{min}}) = \min_{g \in \mathrm{Aut}_X({\mathcal M}_{m_0}, {\mathbb D})} d(m(t), \rho(g) \cdot m_{\mathrm{min}}).
$$
\end{lemma}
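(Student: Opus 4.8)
The plan is to exploit the fact that $\mathcal{M}_{m_0} \cong G^c/G$ is a symmetric space of non-positive curvature together with the convexity of ${\mathbb D}$ established in Proposition~\ref{convex}. First I would set $\mathcal{C} \subset \mathcal{M}_{m_0}$ to be the set of minimizers of ${\mathbb D}$. By Proposition~\ref{convex}, any two minimizers $m_1, m_2$ are joined by a geodesic $m(t)$ along which ${\mathbb D}$ is both convex and equal at the endpoints, hence constant, so every $m(t)$ is again a minimizer; this shows $\mathcal{C}$ is \emph{geodesically convex}. Moreover Proposition~\ref{convex} shows the one-parameter group ${\rm diag}(e^{t\gamma_0},\ldots,e^{t\gamma_N})$ generating this geodesic lies in $\rho(\widetilde{\Aut}_0(X))$, and since it preserves ${\mathbb D}$ (being constant along the geodesic of minimizers) and preserves ${\mathcal M}_{m_0}$, it lies in $\rho(\Aut_X({\mathcal M}_{m_0},{\mathbb D}))$. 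Thus any two minimizers differ by an element of $\rho(\Aut_X({\mathcal M}_{m_0},{\mathbb D}))$; combined with the Remark that such elements send critical points to critical points (so $\rho(\Aut_X({\mathcal M}_{m_0},{\mathbb D}))$ acts on $\mathcal{C}$), this proves $\mathcal{C}$ is a single $\rho(\Aut_X({\mathcal M}_{m_0},{\mathbb D}))$-orbit.

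For the distance-minimization statement, I would first establish the existence of the nearest point. Since $\mathcal{M}_{m_0}$ is a complete, simply connected manifold of non-positive curvature (a Hadamard-type space, being a symmetric space $G^c/G$ of non-positive sectional curvature) and $\mathcal{C}$ is a closed, geodesically convex subset, the nearest-point projection $\pi_{\mathcal{C}}\colon \mathcal{M}_{m_0}\to\mathcal{C}$ is well-defined: for each $m$ there is a \emph{unique} $m_{\mathrm{min}}\in\mathcal{C}$ with $d(m,m_{\mathrm{min}})=d(m,\mathcal{C})$. Closedness of $\mathcal{C}$ follows from continuity of ${\mathbb D}$; uniqueness and existence follow from strict convexity of $t\mapsto d(m,c(t))^2$ along any geodesic $c(t)$ in the CAT(0) space $\mathcal{M}_{m_0}$. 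Because $\rho(\Aut_X({\mathcal M}_{m_0},{\mathbb D}))$ acts by isometries of $\mathcal{M}_{m_0}$ (the metric \eqref{metric} being $G^c$-invariant, in particular invariant under these algebraic isometries) and preserves $\mathcal{C}$, and because $\mathcal{C}$ is exactly one such orbit by the first part, the identity
$$
d(m,m_{\mathrm{min}}) = d(m,\mathcal{C}) = \min_{g\in\Aut_X({\mathcal M}_{m_0},{\mathbb D})} d(m,\rho(g)\cdot m_{\mathrm{min}})
$$
is immediate.

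Finally, for the statement about the geodesic $m(t)$ from $m$ to $m_{\mathrm{min}}$: the key is the \emph{convexity of the distance function to a convex set} in a CAT(0) space, or equivalently the fact that in non-positive curvature the nearest-point projection onto a convex set is distance-non-increasing and, along the geodesic from a point to its projection, the foot point does not change. Concretely, $\pi_{\mathcal{C}}(m(t)) = m_{\mathrm{min}}$ for all $t\in[0,1]$: if some $m(t_0)$ had a strictly closer point $c'\in\mathcal{C}$, then concatenating the geodesic $m\to m(t_0)$ with $m(t_0)\to c'$ and comparing with the comparison triangle in Euclidean space would produce a point of $\mathcal{C}$ strictly closer to $m$ than $m_{\mathrm{min}}$, contradicting uniqueness of the projection. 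Hence $d(m(t),m_{\mathrm{min}})=d(m(t),\mathcal{C})=\min_{g} d(m(t),\rho(g)\cdot m_{\mathrm{min}})$, using again that $\mathcal{C}$ is the $\rho(\Aut_X({\mathcal M}_{m_0},{\mathbb D}))$-orbit of $m_{\mathrm{min}}$.

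The main obstacle I anticipate is verifying cleanly that $\mathcal{M}_{m_0}$ with the metric \eqref{metric} is genuinely a CAT(0) space — i.e. that the symmetric-space structure used via \eqref{metric} and the fact that geodesics are one-parameter subgroups really do give non-positive curvature on this reducible symmetric space, and that the relevant convexity/projection facts can be invoked. Granting the standard theory of symmetric spaces of non-compact type (as in \cite{helgason}), this is routine; the only genuine content beyond that is the geodesic convexity of the minimizer set, which is exactly where Proposition~\ref{convex} is used.
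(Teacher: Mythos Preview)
Your argument is correct, but it takes a different route from the paper. Both proofs obtain the first assertion (that the minimizer set is a single $\rho(\Aut_X({\mathcal M}_{m_0},{\mathbb D}))$-orbit) directly from Proposition~\ref{convex}, exactly as you do. For the existence of a nearest minimizer, however, the paper does not invoke CAT(0)/Hadamard machinery: it takes a minimizing sequence $g_k$, diagonalizes each $\rho(g_k)\cdot m_{\min}$ against $m$ in an admissible orthonormal basis ${\bf s}^k$, and uses compactness of $G(V)$ (resp.\ $G_{T^\perp}(V)$) together with the explicit formula $d^2(m,m_k)=\sum_j(\gamma_j^k)^2$ to extract a convergent subsequence and produce the limiting minimizer $m_\infty$. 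For the final claim about the geodesic $m(t)$, the paper simply uses the triangle inequality: if some $g$ satisfied $d(m(t),\rho(g)\cdot m_{\min})<d(m(t),m_{\min})$, then
\[
d(m,\rho(g)\cdot m_{\min})\le d(m,m(t))+d(m(t),\rho(g)\cdot m_{\min})<d(m,m(t))+d(m(t),m_{\min})=d(m,m_{\min}),
\]
contradicting the choice of $m_{\min}$. Your CAT(0) projection argument reaches the same conclusion but is heavier than necessary; conversely, your approach has the advantage of giving uniqueness of $m_{\min}$ for free (via strict convexity of the squared distance on a Hadamard space), which the paper does not state. Since the paper has already noted that $\mathcal{M}_{m_0}\cong G^c/G$ is a symmetric space of non-positive curvature, your appeal to the standard Hadamard-space projection theorem is legitimate, so there is no gap.
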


\begin{proof}
The first part follows from Proposition~\ref{convex}. For the second claim, suppose $g_k \in \Aut_X({\mathcal M}_{m_0}, {\mathbb D})$ is a sequence such that  $$\lim_{k\to \infty} d(m, \rho(g_k)\cdot m_{\rm min}) = \inf_{g \in \mathrm{Aut}_X({\mathcal M}_{m_0}, {\mathbb D})} d(m, \rho(g) \cdot m_{\rm min}).$$ Let us denote $m_k= \rho(g_n) \cdot m_{\rm min}$ and choose an admissible normal basis ${\bf s}^k$ of $m$ which diagonalizes $m_k$. As $G(V)$ (resp. $G_{T^{\perp}}(V)$) is compact, we can assume that  ${\bf s}^k$ converges to an admissible normal basis  ${\bf s}$ of $m$. On the other hand, as in the proof of Proposition~\ref{convex},  we can express the geodesic between $m$ and $m_k$ by using a one parameter subgroup of $G^c(V)$ (resp. $G^c_{T^{\perp}}(V)$) generated by  ${\rm diag}(e^{\gamma^k_0}, \cdots , e^{\gamma^k_N})$ and compute
$d^2(m,m_k)= \sum_{i=0}^N |\gamma_i^k|^2,$
so that, taking a subsequence, ${\rm diag}(e^{\gamma^k_0}, \cdots , e^{\gamma^k_N})$ converges to  a diagonal matrix ${\rm diag}(e^{\gamma_0}, \cdots , e^{\gamma_N})$; it defines an element $m_{\infty} \in {\mathcal M}_{m_0}$  such that $m_{\infty}(s_i,s_j)=0$ for $i\neq j$ and  $m_{\infty}(s_i,s_i)= e^{-2\gamma_i}m(s_i,s_i)$. The last conclusion holds easily by using the triangle inequality.\end{proof}
The next result establishes the properness of ${\mathbb D}$,  provided it has critical points on ${\mathcal M}_{m_0}$. A similar result has been originally established by H.~Luo \cite{L} in the case when ${\widetilde \Aut}_0(X)$ is trivial. 
\begin{prop}\label{proper} Suppose $m_{\min}$ is a minimizer of ${\mathbb D}$ on $\mathcal{M}_{m_0}$. For every $C > 0$, there exists $C_1$ such that for any $m \in \mathcal{M}_{m_1}$ with the property
$$
{\mathbb D}(m) < {\mathbb D}(m_{\min}) + C,
$$
there exists a $g \in \mathrm{Aut}_X({\mathcal M}_{m_0}, {\mathbb D})$ such that
$$
d(m, \rho(g) \cdot m_{\min}) < C_1.
$$
\end{prop}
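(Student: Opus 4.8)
The plan is to exploit the geodesic convexity of $\mathbb D$ from Proposition~\ref{convex} together with the fact that the orbit $\mathcal M_{m_0}$, being a $G^c(V)$- (resp.\ $G^c_{T^\perp}(V)$-) orbit, is the symmetric space $G^c/G$ of non-positive sectional curvature, so geodesics between any two points exist and are unique. Fix $m\in\mathcal M_{m_0}$ with $\mathbb D(m)<\mathbb D(m_{\min})+C$. By Lemma~\ref{l:2} we may choose the minimizer $m_{\min}$ in its $\rho(\mathrm{Aut}_X(\mathcal M_{m_0},\mathbb D))$-orbit so that $d(m,m_{\min})=\min_{g}d(m,\rho(g)\cdot m_{\min})$, and moreover the same minimizing property holds all along the geodesic $m(t)$, $0\le t\le1$, joining $m(0)=m$ to $m(1)=m_{\min}$. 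It thus suffices to bound $d(m,m_{\min})$ by a constant $C_1=C_1(C)$ depending only on $C$ (and on $m_{\min}$, $m_0$), since then $g=\mathrm{Id}$ works.

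The main step is a linear lower bound for $\mathbb D$ along the geodesic ray emanating from $m_{\min}$. Consider the unit-speed geodesic $m(s)$, $s\ge0$, with $m(0)=m_{\min}$ passing through $m$ at $s=d(m,m_{\min})=:L$. Set $f(s)=\mathbb D(m(s))$. By Proposition~\ref{convex}, $f$ is convex; since $m_{\min}$ is a minimizer, $f'(0)\ge0$, in fact $f'(0^+)\ge 0$ along every ray. The crucial claim is that $f'(0^+)$ is in fact \emph{strictly positive}, bounded below by a positive constant, \emph{provided the initial direction of the geodesic is orthogonal to the orbit directions generated by $\rho(\mathrm{Aut}_X(\mathcal M_{m_0},\mathbb D))$} — and this orthogonality is exactly guaranteed by the variational characterization of $m_{\min}$ furnished by Lemma~\ref{l:2} (the distance-minimizing property along $m(t)$ forces the geodesic's velocity to be perpendicular to the $\mathrm{Aut}$-orbit at each point, hence in particular at $m_{\min}$). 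To make this quantitative I would argue as in Luo~\cite{L} and Donaldson~\cite{D1}: express $f'(s)$ via formula \eqref{D'} as an integral of $Q(s)$ against $\mathcal{FS}(m(s))^n$, where $Q$ is the log-derivative of the Bergman-type function $\sum_j e^{2s\gamma_j}|s_j|_h^2$; as $s\to\infty$ along the ray, $Q(s)$ converges pointwise to $2\max\{\gamma_j : |s_j(x)|_h\ne 0\}$, and integrating over $X$ shows $f'(s)\to 2(N+1)\bar\gamma$ for some $\bar\gamma>0$ determined by the limiting weight configuration — this limit is strictly positive precisely because the orbit $\Phi_{\mathbf s(s)}(X)$ is \emph{not} fixed, i.e.\ the one-parameter subgroup $\mathrm{diag}(e^{s\gamma_j})$ does not lie in $\rho(\mathrm{Aut}_X(\mathcal M_{m_0},\mathbb D))$, which is the content of the minimizing property. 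Monotonicity of $f'$ (convexity) then gives $f(L)-f(0)\ge \int_{s_0}^{L}f'(s)\,ds \ge \delta\,(L-s_0)$ for suitable fixed $s_0,\delta>0$, whence
$$
C > \mathbb D(m)-\mathbb D(m_{\min}) = f(L)-f(0) \ge \delta\,(L-s_0),
$$
so $L=d(m,m_{\min}) < s_0 + C/\delta =: C_1$, as desired.

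The point requiring care — and the main obstacle — is the uniformity of the constant $\delta$: a single ray gives strict positivity of $f'$ at infinity, but one needs this bounded away from $0$ \emph{uniformly over all unit directions at $m_{\min}$ transverse to the $\mathrm{Aut}$-orbit}, so that the estimate does not degenerate as the direction approaches a fixed (orbit) direction. Here I would invoke a compactness argument on the unit sphere in the normal space to the $\rho(\mathrm{Aut}_X(\mathcal M_{m_0},\mathbb D))$-orbit through $m_{\min}$: the map assigning to a direction $\gamma$ (normalized, with $\sum_{i}\gamma_{k,i}=0$ on each block) the limiting slope $\bar\gamma(\gamma)=\lim_{s\to\infty}\tfrac1{2(N+1)}f'(s)$ is continuous and, by the argument above, strictly positive on the compact set of directions transverse to the orbit — continuity together with the fact that $\rho(\mathrm{Aut}_X(\mathcal M_{m_0},\mathbb D))$ acts by isometries fixing the minimizing set lets us pass to a quotient sphere on which $\bar\gamma$ is a positive continuous function, hence bounded below by some $\delta>0$. (Alternatively one can follow Luo's original estimate verbatim, replacing $\mathrm{SL}(V)$ by $G^c(V)$ or $G^c_{T^\perp}(V)$ and the trivial stabilizer by $\mathrm{Aut}_X(\mathcal M_{m_0},\mathbb D)$, the only new ingredient being Lemma~\ref{l:2} to reduce to the transverse directions.) With $\delta$ in hand the displayed chain of inequalities closes the proof.
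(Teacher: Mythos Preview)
Your strategy is different from the paper's and, while it can probably be made to work, it is considerably more involved. The paper argues by contradiction with a rescaling trick that needs nothing beyond convexity and Lemma~\ref{l:2}. Assuming a sequence $m_i$ with ${\mathbb D}(m_i)<{\mathbb D}(m_{\min})+C$ but $\min_g d(m_i,\rho(g)\cdot m_{\min})>i$, one replaces $m_{\min}$ by the orbit translate closest to $m_i$ (so Lemma~\ref{l:2} applies along the geodesic $m_i(t)$ from $m_{\min}$ to $m_i$, parametrized by arclength on $[0,d_i]$) and uses only monotonicity of ${\mathbb D}'$ to get
\[
C>\int_0^{d_i}{\mathbb D}'(m_i(t))\,dt\ \ge\ \int_0^{i}{\mathbb D}'(m_i(t))\,dt\ \ge\ i\int_0^{1}{\mathbb D}'(m_i(t))\,dt\ =\ i\bigl({\mathbb D}(m_i(1))-{\mathbb D}(m_{\min})\bigr).
\]
Thus $\tilde m_i:=m_i(1)$ lies on the unit sphere about $m_{\min}$, satisfies ${\mathbb D}(\tilde m_i)<{\mathbb D}(m_{\min})+C/i$, and still has $\min_g d(\tilde m_i,\rho(g)\cdot m_{\min})=1$ by the second part of Lemma~\ref{l:2}. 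A subsequential limit is then a minimizer at distance $1$ from the orbit of minimizers, contradicting the first part of Lemma~\ref{l:2}. No asymptotic slope, no Chow-weight computation, and no uniform $s_0$ are needed.

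Your route is essentially the direct Luo-style estimate and can presumably be closed: the limiting slope along a one-parameter subgroup is (up to scale) the Hilbert--Mumford weight on the Chow line via Remark~\ref{chow-norm}, which is a maximum of linear functionals in $\gamma$ and hence continuous; Dini's theorem combined with the monotonicity of $s\mapsto f'_\gamma(s)$ would then upgrade pointwise convergence to the uniform $s_0$. But the items you flag do require genuine work --- in particular the identification of the limit of $f'(s)$ against the \emph{degenerating} volume form ${\mathcal FS}(m(s))^n$ is exactly the content of the Chow weight computation, and the reduction to directions orthogonal to the $\rho({\rm Aut}_X({\mathcal M}_{m_0},{\mathbb D}))$-orbit in the metric~\eqref{metric} needs a Riemannian argument (Lemma~\ref{l:2} gives distance-minimality along the geodesic, but orthogonality of the initial direction at $m_{\min}$ is a further step). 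The paper's rescaling argument sidesteps all of this with the single displayed chain of inequalities above; the price is that it is non-constructive (no explicit $C_1(C)$), whereas your approach, once completed, would give $C_1=s_0+C/\delta$ in terms of geometric invariants of the embedding.
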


\begin{proof}
Suppose for  contradiction that there is a constant $C > 0$ and a sequence $m_i \in \mathcal{M}_{m_0}$ such that 
\begin{equation}\label{contradiction}
{\mathbb D}(m_i) < {\mathbb D}(m_{\min}) + C
\end{equation}
and
\begin{equation}\label{contradiction1}
d(m_i, \rho(g) \cdot m_{\min}) > i
\end{equation}
for any $g \in \mathrm{Aut}_X({\mathcal M}_{m_0}, {\mathbb D})$.  By Lemma~\ref{l:2},  for any $i$ there exists $g_i \in \Aut_X({\mathcal M}_{m_0}, {\mathbb D})$ such that
$$
d(m_i, \rho(g_i) \cdot m_{\min}) = \min_{g \in \mathrm{Aut}_X({\mathcal M}_{m_0}, {\mathbb D})} d(m_i, \rho(g) \cdot m_{\min})> i.
$$
Let $m_i(t), 0 \leq t \leq d_i=d(m_i, m_{\min})$ be the normal geodesic connecting $m_{\min}$ and $m_i$. Then, using \eqref{contradiction}, \eqref{contradiction1} and Proposition~\ref{convex}, we get
\begin{eqnarray*}
C & > & {\mathbb D}(m_i) - {\mathbb D}(m_{\min}) \\
& = & \int_0^{d_i} {\mathbb D}'(m_i(t)) dt\\
& \geq & \int_0^i {\mathbb D}'(m_i(t)) dt\\
& \geq & i \int_0^1 {\mathbb D}'(m_i(t)) dt\\
&=& i({\mathbb D}(m_i(1)) - {\mathbb D}(m_{\min})).
\end{eqnarray*}
Letting ${\tilde m}_i = m_i(1)$, we have 
$$
{\mathbb D}({\tilde m}_i) < {\mathbb D}({\tilde m}_{\min}) + \frac{C}{i}
$$
while, by Lemma~\ref{l:2}, 
$$
1 = d({\tilde m}_i, m_{\min}) = \min_{g \in \mathrm{Aut}_X({\mathcal M}_{m_0}, {\mathbb D})} d({\tilde m}_i, \rho(g) \cdot m_{\min}).
$$
Taking a subsequence of ${\tilde m}_i$ converging to a minimizer $m_\infty$ of ${\mathbb D}$, we obtain a contradiction (see Lemma~\ref{l:2}). \end{proof}

\section{Proof of Theorem~\ref{main}}~\label{s:proof}
It is enough to consider the case when the polarized projective manifold $(X,L)$ is the product of two factors $(X_1,L_1)$ and $(X_2,L_2)$. Denote the dimensions of $X, X_1, X_2$ by $n, n_1, n_2$ respectively. Letting $p_i : X \to X_i$ be the canonical projections, we have $L = \pi_1^*(L_1) \otimes \pi_2^*(L_2)$. 

The holomorphic splitting of the tangent bundle $TX = TX_1 \oplus TX_2$ induces a product structure $\widetilde{\Aut}_0(X) = \widetilde{\Aut}_0(X_1)\times \widetilde {\Aut}_0(X_2)$,  so we can fix a maximal torus $T \subset \widetilde {\Aut}_0(X)$ of the form $T= T_1 \times T_2$,  where $T_i \subset \widetilde {\Aut}_0(X_i)$ are maximal tori.  Taking a common tensor power of the $L_i$'s  if necessarily, we will suppose that $(X,L)$ and $(X_i,L_i)$  all satisfy the assumptions made in Section~\ref{s:relative balanced}.  Grauert's direct image theorem for coherent sheaves implies that $V = V_1 \otimes V_2$ where $V=H^0(X,L)$ and $V_i= H^0(X_i, L_i)$. Notice that if $V_i$ splits under $T_i$ as
$$
V_i = \bigoplus_{k=1}^{\nu_i} V_i(\chi^i_k),
$$
then
$$
V = \bigoplus_{j,k} V_1 (\chi^1_j) \otimes V_2(\chi^2_k)
$$
gives the decomposition \eqref{e:split} for $V$ with $\chi_j^1\otimes \chi_i^2 = \chi_k$.

Let $m^i_{0}$ be $T_i$-invariant hermitian inner products on $V_i$. Simplifying the notation in Section~\ref{s:functional},  we let
$\mathcal{M}_i$  be the $G_i^c$ (resp. $({G_i})^c_{T_i^{\perp}})$) orbit  of $m^i_{0}$. The tensor product (of hermitian inner products and bases)  defines a  natural map $\mathcal{M}_1 \times \mathcal{M}_2 \to {\mathcal M}$ where ${\mathcal M}$ is the $G^c$ (resp. $G^c_{T^{\perp}}$) orbit of  $m_0=m^1_{0}\otimes m^2_{0}$. We define the subspace ${\mathcal M}_{\rm prod}$ of decomposable elements  of $\mathcal{M}$  
$$
\mathcal{M}_{\rm prod}  = \{m =m^1 \otimes m^2, \  | \ m^1 \in \mathcal{M}_1, m^2 \in \mathcal{M}_2 \}.
$$

\begin{lemma} \label{subspace} ${\mathcal M}_{\rm prod}$ is a closed totally geodesic submanifold of $\mathcal{M}$ which is stable under the action of $\rho(\widetilde{\Aut}_0(X)) \cap G^c$. Furthermore, for each $m=m^1\otimes m^2 \in  {\mathcal M}_{\rm prod}$  the induced metric ${\mathcal FS}(m)= {\mathcal FS}(m^1) + {\mathcal FS}(m^2)$ on $X=X_1\times X_2$ is a product metric.
\end{lemma}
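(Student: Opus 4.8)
The plan is to verify three independent assertions: that $\mathcal{M}_{\mathrm{prod}}$ is a closed totally geodesic submanifold, that it is stable under $\rho(\widetilde{\Aut}_0(X)) \cap G^c$, and that the Fubini--Study map respects the product decomposition. The last claim is the most transparent and I would dispose of it first. Fix $m = m^1 \otimes m^2$ with admissible orthonormal bases ${\bf s}^1 = \{s^1_\alpha\}$ of $(V_1, m^1)$ and ${\bf s}^2 = \{s^2_\beta\}$ of $(V_2, m^2)$; then ${\bf s} = \{s^1_\alpha \otimes s^2_\beta\}$ is an admissible orthonormal basis of $(V, m)$ compatible with the decomposition $V = \bigoplus_{j,k} V_1(\chi^1_j) \otimes V_2(\chi^2_k)$. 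Under the Segre-type factorization of the Kodaira embedding $\kappa_X = (\kappa_{X_1}, \kappa_{X_2})$ followed by the Segre embedding $\mathbb{C}P^{N_1} \times \mathbb{C}P^{N_2} \hookrightarrow \mathbb{C}P^N$, one has $\sum_{\alpha,\beta} |s^1_\alpha \otimes s^2_\beta|^2_h = \big(\sum_\alpha |s^1_\alpha|^2_{h_1}\big)\big(\sum_\beta |s^2_\beta|^2_{h_2}\big)$ for the product metric $h = h_1 \otimes h_2$ on $L = \pi_1^* L_1 \otimes \pi_2^* L_2$, so that by \eqref{potential} the potential $\log(\sum |s^1_\alpha \otimes s^2_\beta|^2) = \log(\sum_\alpha |s^1_\alpha|^2) + \log(\sum_\beta |s^2_\beta|^2)$ splits, and hence $\mathcal{FS}(m) = \omega_{X,{\bf s}} = \pi_1^* \omega_{X_1,{\bf s}^1} + \pi_2^* \omega_{X_2,{\bf s}^2} = \mathcal{FS}(m^1) + \mathcal{FS}(m^2)$, a product metric as claimed.

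For the totally geodesic property I would use the explicit description of geodesics recalled in Section~\ref{s:functional}: the geodesic issued from $m = m^1 \otimes m^2$ is generated by a one-parameter subgroup ${\rm diag}(e^{t\gamma_\ell})$ acting on an admissible orthonormal basis of $m$. Starting at a decomposable point, choose the adapted basis ${\bf s} = \{s^1_\alpha \otimes s^2_\beta\}$; a tangent vector in $T_m \mathcal{M}_{\mathrm{prod}}$ corresponds to exponents of the special form $\gamma_{\alpha\beta} = \gamma^1_\alpha + \gamma^2_\beta$ (with the constraints $\sum_{\alpha \in \text{block}} \gamma^1_\alpha = 0$ and likewise for $\gamma^2$ on each $T$-weight block), and then $e^{t\gamma_{\alpha\beta}}(s^1_\alpha \otimes s^2_\beta) = (e^{t\gamma^1_\alpha} s^1_\alpha) \otimes (e^{t\gamma^2_\beta} s^2_\beta)$, so the geodesic is $m^1(t) \otimes m^2(t)$ and stays in $\mathcal{M}_{\mathrm{prod}}$. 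This shows $\mathcal{M}_{\mathrm{prod}}$ is totally geodesic; it is a submanifold of the expected dimension $\dim \mathcal{M}_1 + \dim \mathcal{M}_2$ because the tensor product map $\mathcal{M}_1 \times \mathcal{M}_2 \to \mathcal{M}$ is injective modulo the overall scaling $(\lambda m^1) \otimes (\lambda^{-1} m^2) = m^1 \otimes m^2$, which is already accounted for. Closedness follows since a limit of decomposable positive hermitian forms on $V_1 \otimes V_2$ compatible with the weight decomposition is again decomposable (one can recover the factors, up to scale, from the induced forms on the lowest-weight lines, or invoke that the image of the proper Segre-type map is closed).

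For stability under $\rho(\widetilde{\Aut}_0(X)) \cap G^c$ I would use the product structure $\widetilde{\Aut}_0(X) = \widetilde{\Aut}_0(X_1) \times \widetilde{\Aut}_0(X_2)$ induced by the holomorphic splitting $TX = TX_1 \oplus TX_2$, together with the fact that each $g = (g_1, g_2)$ acts on $V = V_1 \otimes V_2$ by $\rho(g) = \rho_1(g_1) \otimes \rho_2(g_2)$ (the lifted action on $L = \pi_1^* L_1 \otimes \pi_2^* L_2$ is the tensor product of the lifts). Hence $\rho(g)$ carries $m^1 \otimes m^2$ to $(\rho_1(g_1) \cdot m^1) \otimes (\rho_2(g_2) \cdot m^2)$, which lies in $\mathcal{M}_{\mathrm{prod}}$ provided $\rho_i(g_i)$ preserves $\mathcal{M}_i$; the hypothesis $\rho(g) \in G^c$ forces $\rho_i(g_i) \in G_i^c$ (block-diagonal in each $T_i$-weight space, with the determinant constraints multiplying correctly), so each factor acts within its own orbit $\mathcal{M}_i$. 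The main obstacle I anticipate is the bookkeeping in identifying $G^c$ and $G^c_{T^\perp}$ with the relevant subgroups of $G_1^c \times G_2^c$ and matching the index constraints \eqref{constraint} across the tensor product --- in particular checking that the determinant normalizations are compatible --- rather than any conceptual difficulty; the geometric content is simply that tensor products of everything in sight behave multiplicatively.
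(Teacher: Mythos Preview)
Your treatment of the product-metric formula, the totally geodesic property, and the stability under $\rho(\widetilde{\Aut}_0(X))\cap G^c$ is correct and essentially identical to the paper's proof. The only substantive difference is the closedness step: rather than arguing abstractly that limits of decomposable hermitian forms remain decomposable, the paper computes distances directly. Writing the geodesic from $m^i_0$ to $m^i_k$ via exponents $\gamma^i_j$ with $\sum_j \gamma^i_j = 0$, the cross terms in
\[
d(m_0,m_k)^2 \;=\; \sum_{r,j}(\gamma^1_r+\gamma^2_j)^2
\]
vanish, and one obtains $d(m_0,m_k)^2$ as a positive linear combination of $d_1(m^1_0,m^1_k)^2$ and $d_2(m^2_0,m^2_k)^2$; hence the tensor map $\mathcal{M}_1\times\mathcal{M}_2\to\mathcal{M}$ is proper and its image is closed. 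Your phrase ``invoke that the image of the proper Segre-type map is closed'' is correct in spirit, but this distance identity \emph{is} the proof of that properness; the alternative of recovering factors ``from the lowest-weight lines'' is vague and would still require checking that the recovered factors lie in the prescribed orbits $\mathcal{M}_i$, not merely that they are $T_i$-invariant hermitian forms.
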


\begin{proof} As $\widetilde{\Aut}_0(X)= \widetilde{\Aut}_0(X_1) \times \widetilde{\Aut}_0(X_2)$ and we have assumed (by taking a tensor power of $L_i$)  that each $\widetilde{\Aut}_0(X_i)$ acts on $L_i$, it follows  that  $\rho(\widetilde{\Aut}_0(X)) \cap G^c$ preserves ${\mathcal M}_{\rm prod}$.

From the description of the geodesics  of ${\mathcal M}$ (resp. ${\mathcal M}_i$) in terms of a 1-parameter subgroups of $G^c$ (resp. $G_i^c$) used in the proof of Proposition~\ref{convex},  it follows that if $m^i(t)$ is a geodesic of ${\mathcal M}_i$ ($i=1,2$), then $m(t)=m^1(t)\otimes m^2(t)$ is a geodesic of ${\mathcal M}$ which belongs to ${\mathcal M}$. 

Thus, in order to established the first part of Lemma~\ref{subspace}, we only need to show that ${\mathcal M}_{\rm prod}$ is a closed 
subset of ${\mathcal M}$. Consider a  sequence $m_k = m^1_k \otimes m^2_k \in {\mathcal M}_{\rm prod}$ with  $m^i_k \in \mathcal{M}_i$. The expression of  the geodesic joining $m^i_{0}$ and $m_k^i$ in terms of a 1-parameter subgroup ${\rm diag}(e^{t\gamma^i_0}, \cdots e^{t\gamma^i_{N_i}})$ of $G_i^c$  (see the previous section) allows us to compute the distance functions $d$ and $d_i$
\begin{equation*}
\begin{split}
d_i(m^i_{0}, m^i_k) ^2& = \sum_{j=0}^{N_i} (\gamma^i_j)^2,  \\
d(m_0, m_k)^2  & = \sum_{r=0}^{N_1} \sum_{j=0}^{N_2} (\gamma^1_r + \gamma^2_j)^2= N_2 d_1(m_{h_1}^1,m^1_k)^2 + N_1 d_2(m_{h_2}^2,m^2_k)^2,
\end{split}
\end{equation*}
where we have used that $\gamma^i_j$ satisfy $\sum_{j=0}^{N_i} \gamma_j^i=0$ for $i=1,2$.  This completes the first part of the Lemma.

The final claim is a direct consequence of \eqref{potential} and the fact that if we have chosen $h=h_1\otimes h_2$ where $h_i$ is a $T_i$-invariant hermitian metric on $L_i$, then the curvature is $\omega= \omega_1 + \omega_2$. \end{proof}

\begin{prop}\label{split}
For any critical point $m$ of ${\mathbb D}$ on ${\mathcal M}$, the induced K\"ahler metric  on $X=X_1 \times X_2$  is compatible with the the product structure. 
\end{prop}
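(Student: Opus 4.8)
The plan is to exploit the closed, totally geodesic submanifold $\mathcal{M}_{\rm prod}\subset\mathcal{M}$ of Lemma~\ref{subspace}, the convexity of ${\mathbb D}$ (Proposition~\ref{convex}), the properness of ${\mathbb D}$ modulo automorphisms (Proposition~\ref{proper}), and the balanced-metric description of critical points (Proposition~\ref{cha}). Since ${\mathbb D}$ has a critical point $m$ on $\mathcal{M}$, by Proposition~\ref{convex} it is a global minimizer, so ${\mathbb D}$ is bounded below on $\mathcal{M}$, hence on $\mathcal{M}_{\rm prod}$. The first --- and crucial --- step is to produce a minimizer of ${\mathbb D}$ lying on $\mathcal{M}_{\rm prod}$. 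I would take a minimizing sequence $m_i=m^1_i\otimes m^2_i\in\mathcal{M}_{\rm prod}$ for ${\mathbb D}|_{\mathcal{M}_{\rm prod}}$; as $\sup_i{\mathbb D}(m_i)<\infty$, Proposition~\ref{proper} provides $g_i\in\Aut_X(\mathcal{M}_{m_0},{\mathbb D})$ and a uniform $C_1$ with $d\big(m_i,\rho(g_i)\cdot m\big)\le C_1$. Now $\rho(\Aut_X(\mathcal{M}_{m_0},{\mathbb D}))$ acts on $\mathcal{M}$ by isometries preserving ${\mathbb D}$, and it preserves $\mathcal{M}_{\rm prod}$: every element of $\widetilde{\Aut}_0(X)=\widetilde{\Aut}_0(X_1)\times\widetilde{\Aut}_0(X_2)$ acts through $\rho$ on $V=V_1\otimes V_2$ as a tensor product, hence carries decomposable hermitian products to decomposable ones, while $\Aut_X(\mathcal{M}_{m_0},{\mathbb D})$ preserves $\mathcal{M}_{m_0}$ by definition (cf. Lemma~\ref{subspace}). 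Thus $\tilde m_i:=\rho(g_i)^{-1}\cdot m_i$ lie in a fixed ball of $\mathcal{M}$, belong to the closed set $\mathcal{M}_{\rm prod}$, and satisfy ${\mathbb D}(\tilde m_i)={\mathbb D}(m_i)$. Since $\mathcal{M}$ is a complete finite-dimensional symmetric space of non-positive curvature, closed balls are compact, so along a subsequence $\tilde m_i\to m_\infty=m^1_\infty\otimes m^2_\infty\in\mathcal{M}_{\rm prod}$ with ${\mathbb D}(m_\infty)=\inf_{\mathcal{M}_{\rm prod}}{\mathbb D}$.

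The second step is that ${\mathbb D}$ restricted to $\mathcal{M}_{\rm prod}$ splits. From $\mathcal{FS}(m^1\otimes m^2)=\mathcal{FS}(m^1)+\mathcal{FS}(m^2)$ (Lemma~\ref{subspace}) and the identity $\omega^n=\binom{n}{n_1}\omega_1^{n_1}\wedge\omega_2^{n_2}$ for a product K\"ahler form $\omega=\omega_1+\omega_2$ on $X_1\times X_2$, differentiating the defining relation of ${\mathbb I}$ along a path of product metrics and using Fubini gives
\[
{\mathbb I}(\omega_1+\omega_2)=\binom{n}{n_1}\big(c_2\,{\mathbb I}_1(\omega_1)+c_1\,{\mathbb I}_2(\omega_2)\big)+\mathrm{const},\qquad c_i:=\int_{X_i}\omega_i^{\,n_i}>0,
\]
where ${\mathbb I}_i$ is the functional of Section~\ref{s:functional} for $X_i$. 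Hence ${\mathbb D}|_{\mathcal{M}_{\rm prod}}$ is, up to an additive constant, a fixed positive linear combination of ${\mathbb D}_1\circ\pi_1$ and ${\mathbb D}_2\circ\pi_2$, where ${\mathbb D}_i$ is the functional for $(X_i,L_i)$ and $\pi_i\colon\mathcal{M}_{\rm prod}\to\mathcal{M}_i$ the natural projections. Therefore $m^i_\infty$ minimizes ${\mathbb D}_i$ on $\mathcal{M}_i$, so it is a critical point of ${\mathbb D}_i$, and by Proposition~\ref{cha} applied to $(X_i,L_i)$ the induced hermitian metric $h_{m^i_\infty}$ on $L_i$ is balanced relative to $T_i$ of some index $b^i$.

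To conclude, tensoring admissible orthonormal bases of $m^1_\infty$ and $m^2_\infty$ produces one for $m_\infty$, and a short Fubini computation shows that the balancedness of the two factors forces the conditions \eqref{conditions} for $(X,L,T)$ with index $b^1\otimes b^2$; so $h_{m_\infty}=h_{m^1_\infty}\otimes h_{m^2_\infty}$ is balanced relative to $T=T_1\times T_2$. By Proposition~\ref{cha}, $m_\infty$ is then a critical point of ${\mathbb D}$ on $\mathcal{M}$, hence (Proposition~\ref{convex}) a global minimizer, so ${\mathbb D}(m_\infty)=\inf_{\mathcal{M}}{\mathbb D}$. By Lemma~\ref{l:2} the minimizers of ${\mathbb D}$ on $\mathcal{M}$ form a single orbit of $\rho(\Aut_X(\mathcal{M}_{m_0},{\mathbb D}))$, so the given critical point satisfies $m=\rho(g)\cdot m_\infty$ for some $g\in\Aut_X(\mathcal{M}_{m_0},{\mathbb D})\subset\widetilde{\Aut}_0(X_1)\times\widetilde{\Aut}_0(X_2)$. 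Since $\mathcal{FS}(m_\infty)$ is a product metric (Lemma~\ref{subspace}) and $g$ acts on $X_1\times X_2$ respecting the factors, $\mathcal{FS}(m)=\mathcal{FS}(\rho(g)\cdot m_\infty)$ is the image of a product metric under a product-preserving automorphism, hence itself a product metric, as claimed. The case where $\mathcal{M}$ is the $G^c_{T^\perp}(V)$-orbit runs the same way, with $G^c_i$ replaced by $(G_i)^c_{T_i^\perp}$ and the indices carried through the constraint \eqref{constraint}.

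I expect the main obstacle to be the first step, namely showing that ${\mathbb D}|_{\mathcal{M}_{\rm prod}}$ attains its infimum: this is where the critical-point hypothesis on $\mathcal{M}$ is actually used, via the properness of ${\mathbb D}$ modulo $\Aut_X(\mathcal{M}_{m_0},{\mathbb D})$ (Proposition~\ref{proper}) together with the closedness and $\Aut_X$-invariance of $\mathcal{M}_{\rm prod}$ and local compactness of the symmetric space --- for a merely convex, bounded-below function the infimum over a totally geodesic submanifold need not be attained. In effect this step upgrades ``$L$ admits a hermitian metric balanced relative to $T$'' to the same statement for each $L_i$.
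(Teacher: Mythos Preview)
Your proof is correct and follows essentially the same approach as the paper's: both take a minimizing sequence in $\mathcal{M}_{\rm prod}$, use properness (Proposition~\ref{proper}) together with the $\Aut_X(\mathcal{M}_{m_0},\mathbb{D})$-invariance and closedness of $\mathcal{M}_{\rm prod}$ (Lemma~\ref{subspace}) to extract a limit $m_\infty\in\mathcal{M}_{\rm prod}$, show its factors are critical for the $\mathbb{D}_i$ and hence that $m_\infty$ is critical for $\mathbb{D}$ on all of $\mathcal{M}$, and conclude via Proposition~\ref{convex}/Lemma~\ref{l:2} that the given critical point differs from $m_\infty$ by a product-preserving automorphism. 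The only cosmetic difference is that you package the step ``$m_\infty^i$ is critical for $\mathbb{D}_i$'' via the splitting formula $\mathbb{D}|_{\mathcal{M}_{\rm prod}}=\binom{n}{n_1}(c_2\,\mathbb{D}_1+c_1\,\mathbb{D}_2)+\mathrm{const}$, whereas the paper computes $\mathbb{D}'(0)$ along a geodesic directly---this is the same Fubini calculation.
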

\begin{proof} Any critical point of ${\mathcal M}$ must necessarily be a minimizer by Proposition~\ref{convex}.  Let $m_{\min} $ be such a minimizer. We pick a sequence $m_k \in {\mathcal M}_{\rm prod}$ such that
$$
\lim_{k \rightarrow \infty} \mathbb{D} (m_k) = \inf_{m \in {\mathcal M}_{\rm prod}} \mathbb{D} (m).
$$
Since the functional ${\mathbb D}$ defined on $\mathcal{M}$ is proper in the sense of Proposition~\ref{proper}, there exist  $g_i \in \mathrm{Aut}_X({\mathcal M}, {\mathbb D})$ such that 
$$
d(\rho(g_i)^{-1} \cdot m_{\min}, m_i) = d(m_{\min}, \rho(g_k) \cdot m_k) < C_1
$$
for all $i$. Putting ${\tilde m}_i = \rho(g_i) \cdot m_i$,  we know by Lemma~\ref{subspace} that $ {\tilde m}_i \in {\mathcal M}_{\rm prod}$. Taking a convergent subsequence of ${\tilde m}_k$  and using the closeness of ${\mathcal M}_{\rm prod}$ (see Lemma~\ref{subspace}), there exists $m \in {\mathcal M}_{\rm prod}$ such that
$$
{\mathbb D}(m) = \min_{\bar{m} \in {\mathcal M}_{\rm prod}} {\mathbb D}(\bar{m}).
$$

Let $m=m^1\otimes m^2$ be a minimizer of ${\mathbb D}$ on ${\mathcal M}_{\rm prod}$. We claim that  $m^i$ is a critical point of the corresponding functional ${\mathbb D}_i$ on $\mathcal{M}_i$. Without loss of generality, we only check this  for $m^1$. Suppose $m^1(t)$ is a geodesic starting from $m^1(0)=m^1$ in $\mathcal{M}_1$, expressed in terms of a 1-parameter subgroup  ${\rm diag}(e^{t\gamma_0^1}, \cdots, e^{t \gamma_{N_1}^1})$ of $G_1^c$ (resp. $(G_1)^c_{T_1^{\perp}}$): there exists an admissible orthonormal basis ${\bf s}^1 =\{ s^1_i, 0 \leq i \leq N_1\} $ of $m^1$  such that ${\bf s}^1(t)=\{e^{t\gamma_0^1}s^1_1, \ldots, e^{t\gamma_{N_1}^1}s_{N_1} \}$ is an admissible orthonormal basis for $m^1(t)$. Let ${\bf s}^2 =\{ s^2_j, 0 \leq j \leq N_2\} $ be an admissible orthonormal basis for $m^2$. Then $m(t) = m^1(t) \otimes m^2$ is a geodesic in $\mathcal{M}$ starting from $m$ and ${\bf s}^1(t) \otimes {\bf s}^2$ is an admissible orthonormal basis  for $m(t)$. Since $m(t) \in {\mathcal M}_{\rm prod}$ and $m$ is a minimizer of $\mathbb{D}$ on ${\mathcal M}_{\rm prod}$, we have
\begin{eqnarray*}
0 &=& \mathbb{D}'(0)\\
&=& \int_X \frac{\sum_{i=0}^{N_1} \sum_{j=0}^{N_2} 2 \gamma_i^1 |s^1_i|^2_{h_1} |s^2_j|^2_{h_2}}{\sum_{i=0}^{N_1} \sum_{j=0}^{N_2} |s^1_i|^2_{h_1} |s^2_j|^2_{h_2}}  \mathcal{FS}(m)^{n_1+n_2}\\
&=& \big( \int_{X_1} \frac{\sum_{i=0}^{N_1} 2 \gamma_i^1 |s^1_i|^2_{h_1} }{\sum_{i=0}^{N_1} |s^1_i|^2_{h_1}}  \mathcal{FS}(m^1)^{n_1}\Big) \times \Big( \int_{X_2} \frac{\sum_{j=0}^{N_2} |s^2_j|^2_{h_2}}{ \sum_{j=0}^{N_2} |s^2_j|^2_{h_2}}  \mathcal{FS}(m2)^{n_2}\Big)\\
&=& C \int_{X_1} \frac{\sum_{i=0}^{N_1} 2 \gamma_i^1 |s^1_i|^2_{h_1} }{\sum_{i=0}^{N_1} |s^1_i|^2_{h_1}}  \mathcal{FS}(m^1)^{n_1}=  C  \ \mathbb{D}_1'(0),
\end{eqnarray*}
where $C$ is a strictly positive constant. We  conclude that $m^1$ is a critical point of $\mathbb{D}_1$ on $\mathcal{M}_1$ by using Proposition~\ref{cha}. Conversely, Proposition~\ref{cha} also shows that $m$ is a critical point of ${\mathbb D}$ on $\mathcal{M}$.  Now, by Proposition~\ref{convex}, the induced K\"ahler metrics on $X$ by the critical points of ${\mathbb D}$ are isometric under the action of $\widetilde{\Aut}_0(X)= \widetilde{\Aut}_0(X_1) \times \widetilde{\Aut}_0(X_2)$ so, in particular, to the induced product K\"ahler metric by $m=m^1\otimes m2$ (see Lemma~\ref{subspace}), which completes the proof.\end{proof}

As the existence of critical points of ${\mathbb D}$ is independent of the choice of orbits (see Lemma~\ref{gabor} and Remark~\ref{chow-norm}), we obtain as an immediate corollary of Proposition~\ref{split}
\begin{thm}\label{reduced} Suppose $X$ admits a balanced K\"ahler metric relative to $T$ in $2\pi c_1(L)$. Then there exits a balanced K\"ahler metric relative to $T$ in $2\pi c_1(L)$ compatible with the product structure  $X=X_1\times X_2$.
\end{thm}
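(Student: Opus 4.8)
The plan is to read Theorem~\ref{reduced} off Proposition~\ref{split}, the only extra ingredient being the fact that the existence of a critical point of ${\mathbb D}$ on an orbit in $\mathcal{H}$ does not depend on which orbit one chooses.

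First I would translate the hypothesis. By Lemma~\ref{characterization} together with Proposition~\ref{cha}, the statement that $2\pi c_1(L)$ contains a K\"ahler metric balanced relative to $T$ is equivalent to the statement that ${\mathbb D}$ has a critical point on \emph{some} orbit ${\mathcal M}_{m_0} \subset \mathcal{H}$ --- of type $G^c(V)$ if one allows an arbitrary index, and of type $G^c_{T^{\perp}}(V)$ if one insists on an index satisfying \eqref{constraint}. By Lemma~\ref{gabor} and Remark~\ref{chow-norm}, whether ${\mathbb D}$ attains a minimum is a property of $(X,L)$ alone and is insensitive to the choice of orbit; hence ${\mathbb D}$ also has a critical point on the orbit ${\mathcal M}$ through the decomposable inner product $m^1_0 \otimes m^2_0$, the orbit which by construction contains the totally geodesic subspace ${\mathcal M}_{\rm prod}$ of Lemma~\ref{subspace}.

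Next I would apply Proposition~\ref{split} to this orbit ${\mathcal M}$: it furnishes a critical point $m = m^1 \otimes m^2 \in {\mathcal M}_{\rm prod}$ whose induced K\"ahler metric on $X$ is compatible with the product structure $X = X_1 \times X_2$; indeed, by the last assertion of Lemma~\ref{subspace}, $\mathcal{FS}(m) = \mathcal{FS}(m^1) + \mathcal{FS}(m^2)$ is literally a product metric. By Proposition~\ref{cha}, the hermitian metric $h_m$ on $L$ determined by $m$ is then balanced relative to $T$ (of the relevant kind of index), and its curvature is $\mathcal{FS}(m)$. This is the product balanced K\"ahler metric in $2\pi c_1(L)$ asserted by the theorem.

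I do not expect a serious obstacle at this stage: everything substantive has already been absorbed into Proposition~\ref{split} (which itself rests on the geodesic convexity of ${\mathbb D}$ from Proposition~\ref{convex} and its properness from Proposition~\ref{proper}) and into the orbit-independence recorded in Remark~\ref{chow-norm}. The only point deserving a word of care is the bookkeeping of indices: in the $G^c_{T^{\perp}}(V)$ version one must note that the product critical point produced inside ${\mathcal M}_{\rm prod}$ automatically has an index satisfying \eqref{constraint}, which is immediate since by Proposition~\ref{cha} being a critical point on a $G^c_{T^{\perp}}$-orbit is precisely that condition; in the plain $G^c(V)$ version any index is admissible and there is nothing to check.
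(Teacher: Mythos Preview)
Your argument is correct and matches the paper's own proof, which simply observes that Theorem~\ref{reduced} is an immediate corollary of Proposition~\ref{split} once one knows (via Lemma~\ref{gabor} and Remark~\ref{chow-norm}) that the existence of critical points of ${\mathbb D}$ is independent of the choice of orbit. Your write-up is more detailed than the paper's one-line deduction, but the logical route is identical.
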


\noindent{\it Proof of Theorem~\ref{main}.} Combining Theorem~\ref{reduced} with Theorem~\ref{do-mabuchi} and Propositions 1 and 2 yields the proof of Theorem~\ref{main}(i). In order to prove Theorem~\ref{main}(ii), we use Theorem~\ref{mabuchi} with $T$ being the connected component of the centre of $\widetilde{{\rm Aut}}_0(X)$, so that, by the assumption,  for one of the factors, $(X_1,L_1)$ say, $T_1=\{ {\rm Id} \}$. It is not hard to see that in this case {\it each} $G^c$ orbit of admissible hermitian inner products on $V=V_1\otimes V_2$ contains products $m=m^1\otimes m^2$. (The latter is not true in general.) We can then apply Proposition~\ref{split}. $\Box$

\begin{rem} The above arguments and the uniqueness established in Lemma 2 would imply the splitting property should Conjecture \ref{relative-stability} be true.
\end{rem}

\end{document}